\documentclass[11pt]{amsart}
\usepackage{amscd,amssymb}
\usepackage[all]{xy}
\usepackage[colorlinks,plainpages,backref,urlcolor=blue]{hyperref}

\topmargin=0.2in
\textwidth5.85in
\textheight7.65in
\oddsidemargin=0.3in
\evensidemargin=0.3in

\newtheorem{theorem}{Theorem}[section]
\newtheorem{corollary}[theorem]{Corollary}
\newtheorem{lemma}[theorem]{Lemma}
\newtheorem{prop}[theorem]{Proposition}

\theoremstyle{definition}
\newtheorem{definition}[theorem]{Definition}
\newtheorem{example}[theorem]{Example}
\newtheorem{remark}[theorem]{Remark}
\newtheorem*{ack}{Acknowledgments}

\newcommand{\Z}{\mathbb{Z}}

\newcommand{\C}{\mathbb{C}}

\newcommand{\PP}{\mathbb{P}}

\renewcommand{\k}{\Bbbk}
\newcommand{\RR}{{\mathcal R}}
\newcommand{\VV}{\mathcal{V}}
\newcommand{\WW}{\mathcal{W}}

\newcommand{\m}{{\mathfrak{m}}}

\newcommand{\T}{{\mathcal{T}}}

\newcommand{\wX}{\widetilde{X}}

\DeclareMathOperator{\rank}{rank}
\DeclareMathOperator{\gr}{gr}
\DeclareMathOperator{\im}{im}

\DeclareMathOperator{\id}{id}
\DeclareMathOperator{\ab}{{ab}}

\DeclareMathOperator{\ch}{char}

\DeclareMathOperator{\Hom}{{Hom}}
\DeclareMathOperator{\Tor}{{Tor}}

\DeclareMathOperator{\ann}{{ann}}

\DeclareMathOperator{\supp}{supp}
\DeclareMathOperator{\specm}{Spec}

\DeclareMathOperator{\nil}{Nil}
\DeclareMathOperator{\Tors}{{Tors}}

\newcommand{\same}{\Longleftrightarrow}
\newcommand{\surj}{\twoheadrightarrow}
\newcommand{\inj}{\hookrightarrow}

\newcommand{\isom}{\xrightarrow{\,\simeq\,}}

\def\set#1{{\left\{#1\right\}}}

\def\dot{\mathchar"013A}  
\newcommand{\hdot}{{\raise1pt\hbox to0.35em{\Large $\dot$\!}}} 

\setcounter{tocdepth}{1}

\begin{document}
\date{February 25, 2014}

\title[Jump loci in the equivariant spectral sequence]{%
Jump loci in the equivariant spectral sequence}

\author[Stefan Papadima]{Stefan Papadima$^1$} 
\address{Simion Stoilow Institute of Mathematics, 
P.O. Box 1-764,
RO-014700 Bucharest, Romania}
\email{Stefan.Papadima@imar.ro}
\thanks{$^1$Partially supported by the Romanian Ministry of
National Education, CNCS-UEFISCDI, grant PNII-ID-PCE-2012-4-0156}

\author[Alexander~I.~Suciu]{Alexander~I.~Suciu$^2$}
\address{Department of Mathematics,
Northeastern University,
Boston, MA 02115, USA}
\email{a.suciu@neu.edu}
\thanks{$^2$Partially supported by NSF grant DMS--1010298 
and NSA grant H98230-13-1-0225}

\subjclass[2000]{Primary
55N25. 
Secondary
14M12, 
20J05,  
55T99.  
}

\keywords{Affine algebra, maximal spectrum, homology 
jump loci, support varieties, equivariant spectral sequence, 
resonance variety, characteristic variety, Alexander invariants, 
completion.}

\begin{abstract}
We study the homology jump loci of a chain complex over 
an affine $\k$-algebra.  When the chain complex is the first 
page of the equivariant spectral sequence associated to
a regular abelian cover of a finite-type CW-complex, we 
relate those jump loci to the resonance varieties 
associated to the cohomology ring of the space. 
As an application, we show that vanishing resonance 
implies a certain finiteness property for the completed 
Alexander invariants of the space. We also show that
vanishing resonance is a Zariski open condition, on a
natural parameter space for connected, finite-dimensional 
commutative graded algebras.
\end{abstract}
\maketitle
\tableofcontents

\section{Introduction}
\label{sect:intro}

\subsection{Overview}
\label{intro1}
The study of the homology groups of abelian covers 
goes back to the 1920s, when J.W.~Alexander 
introduced his eponymous knot polynomial. 
Given a knot in $S^3$, let $X^{\ab}\to X$ be the 
universal abelian cover of its complement. 
The Alexander polynomial of the knot, then, 
is the order of $H_1(X^{\ab},\Z)$, a finitely 
generated, torsion module over the group-ring 
$\Z[H_1(X,\Z)]\cong \Z[t^{\pm 1}]$. 

More generally, consider a connected, finite-type 
CW-complex $X$, with fundamental group  $\pi=\pi_1(X,x_0)$. 
The homology groups of $X^{\ab}$, with coefficients in $\C$, 
are finitely generated modules over the Noetherian ring $\C[\pi_{\ab}]$.   
As shown by Dwyer and Fried in \cite{DF}, the support loci 
of the Alexander invariants $H_i(X^{\ab},\C)$ completely 
determine the (rational) homological finiteness properties 
of its regular, free abelian covers.  This result was recast 
in \cite{PS-plms} in terms of the characteristic varieties 
of $X$, defined as the jump loci for homology with 
coefficients in rank $1$ local systems on our space.  
For a detailed treatment of these topics, and further 
developments, we refer to \cite{Su, SYZ}. 

We revisit here this theory from a more general point of view. 
The key new ingredient in our approach is the equivariant 
spectral sequence from \cite{PS-tams}.  Using techniques 
from commutative algebra, we establish a tight 
connection between the homology jump loci of the first page 
of this spectral sequence, on one hand, and the resonance 
varieties associated to the cohomology ring of the space, 
on the other hand.  In turn, this connection allows us to 
prove an infinitesimal analogue of the Dwyer--Fried finiteness 
test for the completed homology groups of abelian Galois covers. 

\subsection{The equivariant spectral sequence}
\label{intro2}
Let $\nu\colon \pi\surj G$ be an epimorphism from 
the fundamental group of $X$ to a (finitely generated) 
abelian group $G$, and let $X^{\nu}\to X$ be the 
corresponding regular cover. The homology groups 
of $X^{\nu}$ with coefficients in an 
algebraically closed field $\k$ are modules 
over the group-ring $\k{G}$. 
 
Let $J$ be the augmentation ideal of $\k{G}$. The associated 
graded ring, $S=\gr_J(\k{G})$, then, is a finitely generated 
$\k$-algebra. 
As shown in \cite{PS-tams}, there is a spectral sequence over $S$
that converges to the $J$-adic completion of $H_*(X^{\nu},\k)$.
The first page of this spectral sequence is a chain complex   
$E$, with terms the finitely generated $S$-modules 
$E_i=S\otimes_{\k} H_i(X,\k)$ and differentials 
that can be written in terms of the co-multiplication in $H_*(X,\k)$ 
and the induced homomorphism $\nu_*\colon H_1(\pi,\k)\to H_1(G,\k)$. 

The homology jump loci of $E$ are the 
subsets $\VV^i_d(E)$ of the maximal spectrum $\specm(S)$ 
consisting of those maximal ideals $\m$ for which the 
$\k$-vector space $H_i(E\otimes_S S/\m)$ has dimension 
at least $d$. 
To understand these sets in a more geometric fashion, 
consider the projection $\phi\colon G\surj \bar{G}$ 
onto the maximal free-abelian quotient of $G$.   
Then, as we show in Lemma \ref{lem:sbars}, 
the set $\specm(S)$ may be identified  
with $H^1(\bar{G},\k)$. Thus, we may view the 
sets $\VV^i_d(E)$ as subvarieties of the 
$\k$-vector space $H^1(\bar{G},\k)$.

The above definition of homology jump loci works for arbitrary 
chain complexes $E$ over a finitely generated $\k$-algebra $S$. 
In the case when $E$ is the cellular chain complex of the 
universal abelian cover $X^{\ab}$, with coefficients in $\k$, 
and $S=\k[\pi_{\ab}]$, the corresponding jump loci 
(also known as the characteristic varieties of $X$) 
are Zariski closed subsets of the character group 
$\Hom(\pi_{\ab},\k^{\times})$. These sets, which were 
introduced by Green and Lazarsfeld in \cite{GL}, are  
extremely useful in a variety of settings, see for 
instance \cite{DPS-duke, PS-plms, DP-ann, Su, SYZ}.

\subsection{Resonance varieties}
\label{intro3}
To state our main result, we need to recall one more concept. 
Using the cohomology algebra $A=H^*(X,\k)$ as input, we define 
the resonance varieties of $X$ as the sets $\RR^i_d(X,\k)$ 
consisting of those square-zero elements $a\in A^1$ for which 
the $\k$-vector space $H^i(A,a)$ has dimension at least $d$, 
where $(A,a)$ is the cochain complex with terms $A^i$ and 
differentials given by left-multiplication by $a$.  As we show 
in Corollary \ref{cor:vwr}, it is enough to assume that $X$ 
has finite $k$-skeleton, in order to conclude that the sets 
$\RR^i_d(X,\k)$ are Zariski closed, for all $i\le k$ and $d>0$. 

The vanishing property for resonance plays a crucial role in
Theorem \ref{thm:intro1}\eqref{i2} and Corollary \ref{cor:intro2} below. 
In sections \ref{subsec:parameter} and \ref{subsec:vanish}, 
which are largely self-contained and independent of the 
rest of the paper, we analyze this vanishing property 
for arbitrary commutative graded algebras.  More precisely, 
we show in Theorem \ref{thm:gen res} that, if $B$ is a connected, 
finite-dimensional, graded $\k$-vector space, and $P_B$ 
is the parameter space for all commutative graded algebras 
whose underlying graded vector space is $B$, then the 
subset of those cgas whose  resonance vanishes 
is a Zariski open subset of $P_B$.

\subsection{Resonance and the completed Alexander invariants}
\label{intro4}
We are now ready to state our main result, which relates 
the resonance varieties of a space to the finiteness properties 
of the completed homology groups of its abelian Galois covers. 
(Proofs for the two parts of this result will be given in 
Theorems \ref{thm:cv res} and \ref{thm:nustar res}, respectively.)

\begin{theorem}
\label{thm:intro1}
Let $X$ be a connected CW-complex, let $\nu\colon \pi\surj G$ 
be a homomorphism onto a finitely generated abelian group $G$, 
and let $\bar\nu^*\colon H^1(\bar{G},\k) \inj H^1(\pi,\k)$ be 
the homomorphism induced in cohomology by $\bar\nu=\phi\circ \nu$. 

\begin{enumerate}
\item \label{i1}
If $X$ is of finite type, and $E$ is the first page of the 
corresponding equivariant spectral sequence, then 
$\bar\nu^*(\VV^i_d(E))=\im(\bar\nu^*)\cap \RR^i_d(X,\k)$, 
for all $i\ge 0$ and $d>0$.
\\[-8pt]
\item  \label{i2}
Suppose $X$ has finite $k$-skeleton for some $k\ge 1$, 
and, for each $0\le i\le k$, the 
linear subspace $\im(\bar\nu^*)$ intersects the resonance variety 
$\RR^i_1(X,\k)$ at most at $0$.  Then, for each $0\le i\le k$, 
the completion of $H_i(X^{\nu},\k)$ with respect to the 
$J$-adic filtration is finite-dimensional.
\end{enumerate}
\end{theorem}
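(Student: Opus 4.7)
The plan is to deduce (2) from (1) together with convergence of the equivariant spectral sequence, so the heart of the argument lies in (1). I would prove (1) by identifying the fiber of the first-page complex $E$ at a point $\alpha\in\specm(S)\cong H^1(\bar G,\k)$ with the Aomoto complex $(A,\bar\nu^*(\alpha))$ that computes resonance. Concretely, fix $\alpha$ with maximal ideal $\m_\alpha\subset S$. Via Lemma \ref{lem:sbars}, reduction mod $\m_\alpha$ evaluates linear forms in $S^1=\gr^1_J(\k G)$ against $\alpha$. The differential of $E$ is assembled from the co-multiplication on $H_*(X,\k)$ (dual to the cup product on $A=H^*(X,\k)$) together with $\nu_*\colon H_1(\pi,\k)\to H_1(G,\k)=S^1$, so that reduction mod $\m_\alpha$ replaces the linear factor by $\bar\nu^*(\alpha)\in H^1(\pi,\k)=A^1$. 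Dualizing degree by degree, one finds
\[
\dim_\k H_i(E\otimes_S S/\m_\alpha)\;=\;\dim_\k H^i(A,\bar\nu^*(\alpha))
\]
for each $i$, so $\m_\alpha\in\VV^i_d(E)$ iff $\bar\nu^*(\alpha)\in\RR^i_d(X,\k)$; since $\bar\nu^*$ is injective, this gives the desired equality $\bar\nu^*(\VV^i_d(E))=\im(\bar\nu^*)\cap\RR^i_d(X,\k)$.

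\textbf{Part (2).} The hypothesis, combined with (1) and the injectivity of $\bar\nu^*$, yields $\VV^i_1(E)\subseteq\{\m_0\}$ for $0\le i\le k$, where $\m_0\subset S$ is the augmentation ideal. Since $X$ has finite $k$-skeleton, $H_i(X,\k)$ is finite-dimensional for $i\le k$, so the $S$-modules $E_i$ in this range are finitely generated and free. A standard support argument---passing from fiberwise vanishing of homology to vanishing of the localization $H_i(E)_\m$ at each $\m\ne\m_0$---then shows $\supp H_i(E)\subseteq V(J)$ for $i\le k$; a finitely generated $S$-module with such support is annihilated by a power of $J$, hence is $\k$-finite-dimensional. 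The equivariant spectral sequence has $E^2_i=H_i(E)$ and converges to $\widehat{H_i(X^\nu,\k)}$, so the page $E^\infty_i$ is a subquotient of the finite-dimensional $E^2_i$ for $i\le k$; as $E^\infty$ is the associated graded of the $J$-adic filtration on the abutment, the completion itself is finite-dimensional.

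\textbf{Main obstacle.} The delicate point is the identification in (1) that $E\otimes_S S/\m_\alpha$ genuinely realizes (the dual of) the Aomoto complex at $\bar\nu^*(\alpha)$: the differential of $E$ is described only schematically in the introduction in terms of co-multiplication and $\nu_*$, so one must unwind the construction from \cite{PS-tams} and carefully track signs, indices, and the dualizations linking $H_*$ and $H^*$. A secondary subtlety is that for $i=k$ the term $E_{k+1}=S\otimes H_{k+1}(X,\k)$ need not be finitely generated over $S$, so the passage from fiber-vanishing to support of $H_k(E)$ must rely on the fact that $\im(d\colon E_{k+1}\to E_k)$ is nonetheless a finitely generated submodule of the Noetherian module $E_k$.
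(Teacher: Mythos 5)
Your proposal for part (1) matches the paper's proof (Theorem~\ref{thm:cv res}) essentially step for step: both rest on the computation that the transpose of the specialized differential $d^1_i(w)$ equals the Aomoto differential $\delta^{i-1}(\bar\nu^*(w))$, followed by a degreewise duality of dimensions. For part (2), your strategy---apply (1) to force $\VV^i_1(E)\subseteq\{\m_0\}$, pass from fiberwise vanishing to $\supp H_i(E)\subseteq V(J)$ via the K\"unneth/Nakayama argument, invoke the finitely-generated-with-point-support criterion (Lemma~\ref{lem:findim}), and then use convergence of the spectral sequence---is also the paper's route, which goes through Theorem~\ref{thm:vv ww}.

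There is, however, a real omission in part (2): you apply part (1) and work with the equivariant spectral sequence of $X$ directly, but the hypothesis of (2) only gives finite $k$-skeleton, while both part (1) and the construction and convergence of the spectral sequence in~\cite{PS-tams} are stated for \emph{finite-type} $X$. The paper resolves this at the very first line of the proof of Theorem~\ref{thm:nustar res} by invoking Proposition~\ref{prop:skeleton} to replace $X$ by a finite complex $Y$ whose twisted homology, jump loci, and resonance varieties agree with those of $X$ in degrees $\le k$. Your ``secondary subtlety'' (Noetherianity of $E_k$ to handle a possibly infinitely generated $E_{k+1}$) addresses only the finite-generation issue at degree $k$, not the more basic problem that without reducing to finite $X$ you cannot quote part (1), the equivariant spectral sequence machinery, or the separation of the limit filtration. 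After the reduction to finite $Y$, that subtlety disappears entirely, since $E_i=0$ for $i>\dim Y$. A minor second point: in the final step you should say explicitly (as the paper does) that the filtration on the abutment is \emph{separated}; finite-dimensionality of $E^\infty$ alone does not force finite-dimensionality of the abutment without that.
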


As an application, we obtain the following corollary.

\begin{corollary}
\label{cor:intro2}
If $X$ has finite $k$-skeleton, and 
all the resonance varieties $\RR^i_1(X,\k)$ with $i\le k$ 
vanish or are empty, then the completions of the Alexander 
invariants $H_i(X^{\ab},\k)$ are finite-dimensional $\k$-vector 
spaces, for all $i\le k$.  
\end{corollary}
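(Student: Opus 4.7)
The plan is to derive Corollary \ref{cor:intro2} as an immediate consequence of Theorem \ref{thm:intro1}\eqref{i2}, applied to the abelianization map. More precisely, I would take $\nu$ to be the canonical projection $\ab\colon \pi \surj \pi_{\ab}$, so that the corresponding cover $X^{\nu}$ is the universal abelian cover $X^{\ab}$ and the deck group is $G = \pi_{\ab}$. Since $X$ has finite $k$-skeleton with $k\ge 1$ (the case $k=0$ being trivial, as $H_0(X^{\ab},\k) = \k$), the fundamental group $\pi$ is finitely generated, and hence so is $G$; thus $\nu$ satisfies the standing hypotheses of that theorem.

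With this choice, $\bar G$ is the maximal free-abelian quotient of $\pi_{\ab}$, and the induced map $\bar\nu^*\colon H^1(\bar G,\k) \inj H^1(\pi,\k) = H^1(X,\k)$ realizes its image as a linear subspace of $A^1 = H^1(X,\k)$. What remains is to check, for each $0\le i\le k$, that $\im(\bar\nu^*)$ meets $\RR^i_1(X,\k)$ in at most the origin. This is immediate from the hypothesis of the corollary: if $\RR^i_1(X,\k) = \emptyset$, the intersection is empty and so trivially contained in $\{0\}$; and if $\RR^i_1(X,\k) = \{0\}$---the natural reading of \textquotedblleft vanishing\textquotedblright\ for a resonance variety, which always contains the origin---then $\im(\bar\nu^*)\cap \RR^i_1(X,\k) \subseteq \{0\}$ as well.

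Invoking Theorem \ref{thm:intro1}\eqref{i2} then delivers the conclusion that the $J$-adic completion of $H_i(X^{\ab},\k)$ is a finite-dimensional $\k$-vector space for each $0\le i\le k$, which is precisely the statement of the corollary. There is no real obstacle in this argument: the substance has been absorbed into Theorem \ref{thm:intro1}\eqref{i2}, and my role is simply to feed it the abelianization map and observe that its hypotheses are met.
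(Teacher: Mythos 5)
Your proposal is correct and follows exactly the paper's route: Corollary \ref{cor:intro2} is indeed obtained by specializing Theorem \ref{thm:intro1}\eqref{i2} (equivalently, Theorem \ref{thm:nustar res}) to the abelianization map $\nu=\ab\colon\pi\surj\pi_{\ab}$, noting that the hypothesis $\RR^i_1(X,\k)\subseteq\{0\}$ or $\RR^i_1(X,\k)=\emptyset$ forces $\im(\bar\nu^*)\cap\RR^i_1(X,\k)\subseteq\{0\}$ regardless of what $\im(\bar\nu^*)$ is. This is precisely how the paper obtains Corollary \ref{cor:univ abel}, which is the body-text version of the same statement.
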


This corollary generalizes Theorem C(2) from \cite{DP-ann}, 
a result proved in that paper in the case when $k$ equals $1$ 
and the coefficient field is $\C$, using a different approach.
It should be noted that, even in this situation, the 
Alexander invariants themselves may well be infinite-dimensional. 

An interesting example where Corollary \ref{cor:intro2} applies 
arises when $X=\T_g$, the Torelli space for curves of genus $g\ge 4$.
Working over $\C$, the vanishing resonance property for $k=1$ 
is proved in \cite[Theorem 4.4]{DP-ann}.  Furthermore, 
Theorem B from \cite{DHP}, in  genus $g\ge 6$, together 
with a recent result of Hain from \cite{H}, in genus $g=4$ or $5$, 
give a non-trivial computation of the completion of $H_1(\T_g^{\ab},\C)$.

\section{The homology jump loci of a chain complex}
\label{sect:jump}

In this section we introduce the support varieties and 
homology jump loci of a chain complex over a finitely 
generated $\k$-algebra, and study some of their properties.

\subsection{Maximal spectrum and supports}
\label{subsec:spec supp}

We start by reviewing some standard notions from commutative 
algebra, as they can be found, for instance, in \cite{Ei}. 

Fix a ground field $\k$. Let $S$ be a commutative, 
finitely generated $\k$-algebra (also known as an 
affine $\k$-algebra), and let $\specm(S)$ be the 
maximal spectrum of $S$.  This set comes 
endowed with the Zariski topology, whereby 
a subset $F$ is closed if and only if  there is an ideal 
$\mathfrak{a}\subseteq S$ such that $F$ equals 
$V(\mathfrak{a})=\set{\m \mid \m\supseteq  \mathfrak{a}}$, 
the ozero set of the ideal $\mathfrak{a}$.  

Now suppose $\k$ is algebraically closed. Then 
$S/\m=\k$, for every maximal ideal $\m$, 
and we have a natural identification 
\begin{equation}
\label{eq:mspec}
\specm(S)=\Hom_{\text{$\k$-alg}} (S,\k),
\end{equation}
under which a maximal ideal $\m\subset S$ 
corresponds to the $\k$-algebra morphism  
$\rho\colon S \to S/\m=\k$. 

Denote by $S_{\m}$ the localization of the ring $S$ 
at the maximal ideal $\m$. Clearly, the above morphism 
$\rho$  factors through a ring morphism 
$\rho_{\m}\colon S_{\m}\to S_{\m}/\m S_{\m}$.
Given an $S$-module $M$, denote by $M_{\m}$ its localization 
at the maximal ideal $\m$; then $M_{\m}$ acquires in a natural 
way the structure of an $S_{\m}$-module. Define the {\em support}\/ 
of $M$ as 
\begin{equation}
\label{eq:supp}
\supp(M)=\{ \m \in \specm(S) \mid M_{\m}\ne 0\}.
\end{equation}

If $M$ is a finitely generated $S$-module, the 
support of $M$ is a Zariski closed subset of $\specm(S)$, since 
\begin{equation}
\label{eq:vann}
\supp(M) = V( \ann ( M) ),
\end{equation}
where $\ann(M) \subseteq S$ is the annihilator $M$. 

\subsection{Support loci}
\label{subsec:supp jump}

Let $E=(E_{\bullet},d)$ be a non-negatively graded chain 
complex over $S$; in other words, a sequence of $S$-modules 
$\{E_i\}_{i\ge 0}$ and $S$-linear maps between them, 
\begin{equation}
\label{eq:cc}
\xymatrix{\cdots \ar[r] & E_{i} \ar^{d_i}[r] & E_{i-1} \ar[r] & \cdots \ar[r] & 
E_1 \ar^{d_1}[r] & E_0 \ar[r] & 0},
\end{equation}
satisfying $d_{i}\circ d_{i+1}=0$.  Evidently, the homology groups of 
the chain complex, $H_i(E)=\ker d_i/\im d_{i+1}$, are again $S$-modules. 

\begin{definition}
\label{def:supp var}
The {\em support varieties}\/ of the $S$-chain complex $E$ are 
the supports of the exterior powers of the homology modules 
$H_*(E)$:
\begin{equation}
\label{eq:supp cc}
\WW^i_d(E)  = \supp \Big(\bigwedge^d H_i(E)\Big). 
\end{equation}
\end{definition}

These subsets of $\specm(S)$ are defined for all integers $i$ and all 
non-negative integers $d$; they are empty if $i<0$ and $d>0$.  
Furthermore, for each $i\ge 0$, we have a nested sequence
\begin{equation}
\label{eq:wwid}
\specm(S)=\WW^i_0(E) \supseteq \WW^i_1(E) \supseteq  
\WW^i_2(E) \supseteq\cdots .
\end{equation} 
It is readily checked that these sets depend 
only on the chain-homotopy equivalence class 
of the $S$-chain complex $E$. 

Now suppose $E$ is a chain complex of finitely 
generated $S$-modules.  Then the sets $\WW^i_d(E)$ are 
Zariski closed subsets of $\specm(S)$, for all integers $i$ 
and $d\ge 0$. Indeed, if $E_i$ is a finitely generated $S$-module, 
then $H_i(E)$ is also finitely generated, and so are all its exterior 
powers; thus, the assertion follows from formula \eqref{eq:vann}.

\subsection{Homology jump loci}
\label{subsec:jumps}

Let $E$ be a chain complex of $S$-modules as in \eqref{eq:cc}. 
From now on, we will assume that the coefficient field $\k$ is 
algebraically closed, to insure that the residue fields $S/\m$ 
are isomorphic to $\k$, for all $\m\in \specm(S)$.

\begin{definition}
\label{def:jump var}
The {\em homology jump loci}\/ of the $S$-chain complex $E$ are 
defined as
\begin{equation}
\label{eq:jump cc}
\VV^i_d(E)  = \{ \m \in \specm(S) \mid \dim_{\k} H_i(E\otimes_S S/\m) \ge d\}.
\end{equation}
\end{definition}

As before, these sets are defined for all integers $i$ and all 
non-negative integers $d$; they are empty if $i<0$ and $d>0$;   
and, for each $i\ge 0$, they form a nested sequence
\begin{equation}
\label{eq:vvid}
\specm(S)=\VV^i_0(E) \supseteq \VV^i_1(E) \supseteq  
\VV^i_2(E) \supseteq\cdots .
\end{equation} 
Furthermore, the sets $\VV^i_d(E)$ depend 
only on the chain-homotopy equivalence class  
of  $E$. Under some mild restrictions on the chain 
complex $E$, its homology jump loci 
are Zariski closed subsets.  The next lemma   
 makes this statement 
more precise.

\begin{lemma}
\label{lem:zcv}
Suppose $E$ is a chain complex of free, finitely 
generated $S$-modules, and $\k$ is algebraically closed.  
Then the sets $\VV^i_d(E)$ are 
Zariski closed subsets of $\specm(S)$, for all integers $i$ 
and $d\ge 0$. 
\end{lemma}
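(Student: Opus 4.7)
The plan is to trivialize the problem by choosing bases for the free $S$-modules $E_i$ so that each differential $d_i\colon E_i\to E_{i-1}$ is given by an explicit matrix with entries in $S$. The hypothesis that $\k$ is algebraically closed ensures $S/\m\cong\k$ for every $\m\in\specm(S)$, so for a fixed maximal ideal the complex $E\otimes_S S/\m$ is just a complex of finite-dimensional $\k$-vector spaces whose differentials are the matrices of the $d_i$ with each entry replaced by its image under the evaluation map $\rho_\m\colon S\twoheadrightarrow S/\m=\k$.

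First I would invoke the rank-nullity identity
\[
\dim_{\k} H_i(E\otimes_S S/\m) \;=\; \rank_{\k}(E_i\otimes_S S/\m) - \rank_{\k} d_i(\m) - \rank_{\k} d_{i+1}(\m),
\]
where $d_j(\m)$ denotes the evaluation of the matrix $d_j$ at $\m$. Because each $E_i$ is free of finite rank, $\dim_{\k}(E_i\otimes_S S/\m)$ is a constant $e_i$ independent of $\m$. Consequently the condition $\dim_{\k} H_i(E\otimes_S S/\m)\ge d$ is equivalent to the existence of non-negative integers $r,s$ with $r+s \le e_i-d$ such that $\rank_{\k} d_i(\m)\le r$ and $\rank_{\k} d_{i+1}(\m)\le s$ simultaneously.

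Next I would recall that a matrix over a field has rank at most $r$ if and only if all its $(r+1)\times(r+1)$ minors vanish. Applied to $d_j(\m)$, this says $\rank_{\k} d_j(\m)\le r$ iff every $(r+1)\times(r+1)$ minor of the matrix $d_j$, viewed as an element of $S$, lies in $\m$. Letting $I_{r+1}(d_j)\subseteq S$ be the ideal generated by these minors (the usual Fitting-type ideal), this is equivalent to $\m\in V(I_{r+1}(d_j))$, a Zariski closed subset of $\specm(S)$.

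Combining these two observations gives the explicit description
\[
\VV^i_d(E) \;=\; \bigcup_{\substack{r,s\ge 0\\ r+s\le e_i-d}} V\bigl(I_{r+1}(d_i)\bigr)\cap V\bigl(I_{s+1}(d_{i+1})\bigr),
\]
which is a finite union of Zariski closed sets and hence Zariski closed. The only mild subtlety — and the step I would double-check — is the boundary cases ($i<0$, $d=0$, or indices beyond the length of $E$), where one either has $\VV^i_d(E)=\emptyset$ or $\VV^i_d(E)=\specm(S)$, both trivially closed; no genuine obstacle arises, because the freeness of the $E_i$ removes any flatness or base-change issues that could otherwise complicate the identification of $d_j(\m)$ with the matrix-evaluation.
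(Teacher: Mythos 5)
Your argument is correct and follows the same strategy as the paper's: base-change to $S/\m \cong \k$, express $\dim_\k H_i(E\otimes_S S/\m)$ via rank--nullity in terms of $\rank d_i(\m)$ and $\rank d_{i+1}(\m)$, and translate the rank bound into the vanishing of determinantal minors. The only (cosmetic) difference is packaging: you write $\VV^i_d(E)$ as a finite union over pairs $(r,s)$ of intersections of determinantal loci, whereas the paper encodes the single inequality $\rank d_{i+1}(\m)+\rank d_i(\m)\le c_i-d$ as the vanishing of all minors of size $c_i-d+1$ of the block-diagonal matrix $d_{i+1}\oplus d_i$, producing one determinantal ideal instead of a union.
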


\begin{proof}
By definition, a maximal ideal $\m\in \specm(S)$ 
belongs to the set $\VV^i_d(E)$ if and only if 
$\rank d_{i+1}(\m) + \rank d_{i}(\m) \le c_i - d$, 
where $c_i=\rank_{S} E_i$ and $d_{i}(\m)=d_{i}\otimes_S S/\m$. 
Hence, $\VV^i_d(E)$ is the zero-set of the ideal generated 
by all minors of size $c_{i}-d+1$ of the block-matrix 
$d_{i+1}\oplus d_{i}$. 
\end{proof}

As the next example indicates, the freeness assumption 
from Lemma \ref{lem:zcv} is crucial for the conclusion to hold, 
even in the presence of the finite-generation assumption.  
We refer to \cite[Example 9.5]{DP12} for other, more 
exotic examples, where both the freeness and the finite-generation 
assumptions are violated.

\begin{example}
\label{ex:aug}
Let $\k$ be an algebraically closed field, and 
let $S=\k[x]$.  Consider the chain complex 
of (finitely-generated) $S$-modules  
$E\colon S \xrightarrow{\,\epsilon\,} \k$,  
where $E_0=\k$, 
viewed as a trivial $S$-module, and 
$\epsilon$ is the $\k$-algebra map 
given by $\epsilon(x)=0$. It is readily verified that 
$\VV^1_1(E) =\k\setminus \{0\}$, which, of course, 
is not a Zariski closed subset of $\specm(S)=\k$.
\end{example}

\subsection{Comparing the two sets of loci}
\label{subsec:ww vs vv}

We are particularly interested in the sets 
$\WW^i_1(E)  = \supp H_i(E)$ and 
$\VV^i_1(E)  = \{ \m  \mid  H_i(E\otimes_S S/\m) \ne 0\}$. 
The next theorem (which, as we shall see in 
\S\ref{cor:vv ww}, generalizes a result from \cite{PS-plms}),  
establishes a comparison between these two types of sets.  

\begin{theorem}
\label{thm:vv ww}
Let $S$ be a finitely generated algebra over an algebraically 
closed field $\k$, and 
let $E$ be a chain complex of free, finitely generated 
$S$-modules.   Then, for all integers $q$,
\begin{equation}
\label{eq:vw}
\bigcup_{i\le q} \WW^i_1(E) = \bigcup_{i\le q} \VV^i_1(E).
\end{equation}
\end{theorem}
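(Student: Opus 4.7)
The plan is to work locally at each $\m \in \specm(S)$. Since localization is exact, $\m \in \WW^i_1(E)$ iff $H_i(E)_{\m} \ne 0$, and $\m \in \VV^i_1(E)$ iff $H_i(E\otimes_S S/\m) \ne 0$. Setting $R = S_\m$, $k = S/\m$, and $F = E \otimes_S R$, the task reduces to the following purely local-ring statement: for a bounded-below complex $F$ of finitely generated free $R$-modules over a Noetherian local ring $(R,\m_R,k)$, one has $H_j(F) = 0$ for all $j \le q$ if and only if $H_j(F \otimes_R k) = 0$ for all $j \le q$.

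The central tool will be the first-quadrant hyperhomology spectral sequence
\[
E^2_{p,q'} = \Tor_p^R(H_{q'}(F), k)\;\Longrightarrow\; H_{p+q'}(F \otimes_R k),
\]
whose convergence is automatic because $F$ is a complex of flat modules (so $F \otimes^L_R k \simeq F \otimes_R k$) and the $E^2$-page is concentrated in the first quadrant. The easy direction follows at once: if $H_{q'}(F) = 0$ for all $q' \le q$, then every $E^2_{p,q'}$ with $q' \le q$ vanishes, and for each $i \le q$ the graded pieces $E^\infty_{p,i-p}$ of $H_i(F \otimes k)$ are subquotients of $E^2$-terms with vertical index $i - p \le q$, hence also vanish.

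The nontrivial direction, that $H_j(F \otimes k) = 0$ for all $j \le q$ forces $H_i(F) = 0$ for all $i \le q$, will be handled by induction on $i$. Under the inductive hypothesis $H_j(F) = 0$ for $j < i$, the corner term $E^2_{0,i} = H_i(F) \otimes k$ is a permanent cycle: each incoming differential $d^r \colon E^r_{r,i-r+1} \to E^r_{0,i}$ with $r \ge 2$ has source a subquotient of $E^2_{r,i-r+1} = \Tor_r(H_{i-r+1}(F), k)$, which vanishes because $i - r + 1 < i$; outgoing differentials land in negative first coordinate. So $E^\infty_{0,i} = H_i(F) \otimes k$ is a subquotient of $H_i(F \otimes k) = 0$, giving $H_i(F) \otimes k = 0$. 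Since $H_i(F)$ is a finitely generated $R$-module (a subquotient of the Noetherian module $F_i$), Nakayama's lemma then yields $H_i(F) = 0$, closing the induction.

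I expect the main obstacle to be the edge-differential bookkeeping in the inductive step: one must check that \emph{every} higher incoming differential, not just $d^2$, is killed by the induction hypothesis, which is ultimately a matter of the index shift $i - r + 1 < i$ for $r \ge 2$. Everything else — convergence of the hyperhomology spectral sequence, exactness of localization, and Nakayama's lemma — is entirely standard.
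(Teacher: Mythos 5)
Your proof is correct and takes essentially the same route as the paper: localize at $\m$, run the K\"unneth (hyperhomology) spectral sequence $\Tor^{S_\m}_p(H_t(E)_\m, S/\m) \Rightarrow H_{p+t}(E\otimes_S S/\m)$, and apply Nakayama's lemma to the corner term $E^2_{0,q}$. The only cosmetic difference is that you phrase the hard direction as a local vanishing equivalence by induction on the homological degree, while the paper inducts on $q$ and isolates the first nonvanishing $H_q(E)_\m$; the spectral sequence bookkeeping is identical.
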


\begin{proof}
If $q\le -1$,  we have 
$\bigcup_{i\le q} \WW^i_1(E) = \bigcup_{i\le q} \VV^i_1(E)=\emptyset$. 
The  case $q\ge -1$ is done by induction on $q$, starting at $q=-1$.

Let $\m$ be a maximal ideal in $S$, and let 
$\rho\colon S \to S/\m$ be the corresponding $\k$-algebra morphism. 
Consider the K\"{u}nneth spectral sequence associated to the 
free chain complex $E\otimes_{S} S_{\m}$ and the change-of-rings 
map $\rho_{\m}\colon S_{\m}\to S_{\m}/\m S_{\m}$: 
\begin{equation}
\label{eq:ss local}
E^2_{s,t}=\Tor^{S_{\m}}_s (H_t(E)_{\m},S_{\m}/\m S_{\m})
\Rightarrow H_{s+t}(E\otimes_{S} S/\m).
\end{equation}

First suppose $\m \not\in\bigcup_{i=0}^{q} \WW^{i}_1(E)$.  Then, 
for each $t\le q$, we have $H_t(E)_{\m}=0$, and so 
$E^2_{s,t}=0$, and thus $E^{\infty}_{s,t}=0$. 
Consequently, $H_{i}(E\otimes_{S} S/{\m})=0$, for all $i\le q$, 
which means that 
$\m \not\in\bigcup_{i=0}^{q} \VV^i_1(E)$.

Now suppose $\m \in\bigcup_{i=0}^{q} \WW^{i}_1(E)$. 
We may assume $\m \not\in \supp H_{t}(E)$, 
for all $t<q$, and $\m \in \supp H_{q}(E)$, for 
otherwise we'd be done, by the induction hypothesis. 
These assumptions mean that $H_{t}(E)_{\m}=0$, for $t< q$, 
and $H_{q}(E)_{\m}\ne 0$. 
In particular, we have $E^2_{s,t}=0$, for $t< q$, and thus
$E^{2}_{0,q} = E^{\infty}_{0,q}$.  Hence, 
\begin{equation}
\label{eq:hkesm}
\begin{split}
H_{q}(E\otimes_{S} S/{\m})=E^{\infty}_{0,q}=E^{2}_{0,q}=
\hspace{1.2in}\\
H_{q}(E)_{\m}\otimes_{S_{\m}} S_{\m}/\m S_{\m}=
H_{q}(E)_{\m}/\m H_{q}(E)_{\m}.
\end{split}
\end{equation}

By assumption, $E_q$ is a finitely generated $S$-module;   
thus, $H_{q}(E)$ is also finitely generated.  Hence, 
$H_{q}(E)_{\m}$ is a non-zero, finitely generated $S_{\m}$-module. 
By Nakayama's Lemma, the module $H_{q}(E)_{\m}/\m H_{q}(E)_{\m}$ 
is also non-zero. Using \eqref{eq:hkesm}, we conclude that 
$\m \in\bigcup_{i=0}^{q} \VV^i_1(E)$, and this completes the proof. 
\end{proof}

Note that the freeness assumption is again crucial for this 
theorem to hold.  For instance, if $E$ is the chain complex from 
Example \ref{ex:aug}, then $\bigcup_{i\le 1} \VV^i_1(E)=\k\setminus \{0\}$, 
whereas $\bigcup_{i\le 1} \WW^i_1(E)=\k$.

\section{The resonance varieties of a graded algebra}
\label{sect:res cga}

We now turn to the resonance varieties associated to a 
commutative graded algebra, and set up a parameter 
space where vanishing resonance is a Zariski open condition. 

\subsection{Resonance varieties}
\label{subsec:resvar}

Let $A$ be a commutative graded algebra over a field $\k$, 
for short, a cga.  We will assume throughout that $A$ is 
connected, i.e., $A^0=\k$. 

Let $a\in A^1$, and assume $a^2=0$ (this condition 
is redundant if $\ch (\k)\ne 2$, by graded-commutativity 
of the multiplication in $A$).  
The {\em Aomoto complex}\/ of $A$ (with respect to $a\in A^1$) 
is the cochain complex of $\k$-vector spaces,
\begin{equation}
\label{eq:aomoto}
\xymatrixcolsep{30pt}
\xymatrix{(A , \delta(a))\colon  \ 
A^0\ar^(.66){\delta^0(a)}[r] & A^1\ar^(.45){\delta^1(a)}[r] 
& A^2  \ar^(.45){\delta^2(a)}[r]& \cdots },
\end{equation}
with differentials given by $\delta^i(a) (b)=ab$, for $b\in A^i$.
We define the {\em resonance varieties}\/ of $A$ as
\begin{equation}
\label{eq:rida}
\RR^i_d(A) = \{ a \in A^1 \mid a^2=0 \text{ and }
\dim _{\k} H^i(A,\delta(a))\ge d\}.
\end{equation}

It follows at once from the definition that
$\RR^0_1(A) = \{ 0\}$ and $\RR^0_d(A) = \emptyset$ for $d>1$,
since $A$ is connected.

\begin{lemma}
\label{lem:res closed}
Suppose $A$ is locally finite, i.e., $\dim_{\k} A^i<\infty$, for all $i\ge 1$. 
Then the sets $\RR^i_d(A)$ are Zariski closed cones inside the 
affine space $A^1$.
\end{lemma}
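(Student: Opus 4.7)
The plan is to imitate the proof of Lemma \ref{lem:zcv}, parametrizing the Aomoto complex by the coordinates of $a\in A^1$ and encoding the jump condition on $\dim_\k H^i(A,\delta(a))$ as the vanishing of minors of matrices whose entries depend linearly on $a$. Local finiteness of $A$ guarantees that $A^{i-1}$, $A^i$, $A^{i+1}$ and $A^1$ are all finite-dimensional, so $A^1$ is a genuine affine space and bases may be fixed in each relevant degree.

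For the Zariski-closedness I would argue in two stages. First, the subset $Q=\{a\in A^1\mid a^2=0\}$ is the zero locus of the quadratic polynomial map $A^1\to A^2$, $a\mapsto a\cdot a$, hence is Zariski closed. Second, fix a basis $\{e_k\}$ of $A^1$ and write $a=\sum_k a_k e_k$. The Aomoto differentials $\delta^j(a)=\sum_k a_k\,\delta^j(e_k)$ are then linear in the coordinates $a_k$; in fixed bases of $A^j$ and $A^{j+1}$, they are represented by matrices $M^j(a)$ whose entries are linear forms in $a$. For $a\in Q$ the Aomoto complex is an honest cochain complex, so
\[
\dim_\k H^i(A,\delta(a))=\dim_\k A^i-\rank M^{i-1}(a)-\rank M^i(a),
\]
and the condition $\dim_\k H^i(A,\delta(a))\ge d$ becomes $\rank M^{i-1}(a)+\rank M^i(a)\le \dim_\k A^i -d$. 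Exactly as in the proof of Lemma \ref{lem:zcv}, this is equivalent to the simultaneous vanishing of all minors of size $\dim_\k A^i -d+1$ of the block-diagonal matrix $M^{i-1}(a)\oplus M^i(a)$; these minors are polynomials in $a$, so they cut out a Zariski closed subset of $A^1$. Intersecting with $Q$ produces $\RR^i_d(A)$, which is therefore Zariski closed in $A^1$.

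For the cone property, given $a\in \RR^i_d(A)$ and $\lambda\in\k^{\times}$ I have $(\lambda a)^2=\lambda^2 a^2=0$ and $\delta^j(\lambda a)=\lambda\,\delta^j(a)$, so all ranks are preserved and $\lambda a\in \RR^i_d(A)$. For $\lambda=0$ there are two cases: either $\RR^i_d(A)$ is empty, in which case it is trivially a cone, or it is nonempty, which forces $d\le \dim_\k A^i$; and then $H^i(A,0)=A^i$ has dimension $\ge d$, so $0\in\RR^i_d(A)$.

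I do not anticipate any substantial obstacle; the only point that requires a moment's care is the cone behaviour at the origin, which is handled by the elementary dichotomy above. Once one notes that the Aomoto differentials depend \emph{linearly} on $a$, the minor-of-a-block-matrix argument from Lemma \ref{lem:zcv} transports over essentially verbatim, with the extra closed condition $a^2=0$ grafted on.
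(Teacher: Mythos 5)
Your proof follows essentially the same route as the paper's: first cut out the closed cone $Q=\{a\in A^1\mid a^2=0\}$, then rewrite the dimension-jump condition as a rank inequality on the linearly-parametrized Aomoto differentials, and invoke the minor-vanishing argument of Lemma~\ref{lem:zcv}; the cone property follows by scaling. The paper states the scaling argument only for $\lambda\in\k^\times$, so your extra remark checking that $0\in\RR^i_d(A)$ whenever the set is nonempty is a welcome bit of added care, but it does not change the substance of the proof.
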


\begin{proof}
Let $Z=\set{a\in A^1 \mid a^2=0}$; clearly, $Z$ is a 
Zariski closed cone in $A^1$.  
By definition, an element $a\in  Z$ belongs to 
the set $\RR=\RR^i_d(A)$ if and only if 
$\rank \delta^{i-1}(a) + \rank \delta^{i}(a) \le b_i -d$, 
where $b_i=\dim_{\k} A^i$.  
An argument as in Lemma \ref{lem:zcv} now shows that  
$\RR$ is a Zariski closed subset of $A^1$. 
Clearly, $a\in \RR$ if and only if $\lambda a \in \RR$, 
for all $\lambda\in \k^{\times}$; thus, $\RR$ is a 
cone in $A^1$.
\end{proof}

\subsection{A parameter space for graded algebras}
\label{subsec:parameter}

Let $B=\bigoplus_{i\ge 0} B^i$ be a graded $\k$-vector 
space, with $B^0=\k$ and $\dim_{\k} B<\infty$, encoded by the sequence
$\{ b_i= \dim_{\k}  B^i \}_{i\ge 1}$. 
Given these data, we define a parameter space 
for all commutative graded algebras whose 
underlying graded vector space is $B$, as follows:
\begin{equation}
\label{eq:param}
P_B=\{  \text{$A$ a cga} \mid \dim_{\k} A^i =\dim_{\k}  B^i 
\text{ for all $i\ge 0$} \}.
\end{equation}

It is readily seen that $P_B$ is an affine cone in the vector space 
\begin{equation}
\label{eq:hombi}
V_{B}=\bigoplus_{i,j\ge 1} \Hom_{\k}(B^i\otimes B^j, B^{i+j}),
\end{equation}
cut out by homogeneous  quadrics (corresponding to the associativity 
conditions for a cga), and linear equations (corresponding to 
graded-commutativity conditions for a cga). 

\begin{example}
\label{ex:param2}
Suppose $B=\k\oplus B^1\oplus B^2$.  In that case, a 
commutative graded algebra  
$A\in P_B$ corresponds to an anti-symmetric $\k$-linear map 
$B^1\otimes B^1\to B^2$.   Thus, $P_B$ 
is an affine space; in fact, 
$P_B=\Hom_{\k}(B^1\wedge B^1,B^2)$ if $\ch(\k)\ne 2$. 
\end{example} 

\subsection{Vanishing resonance}
\label{subsec:vanish}

We are now ready to state and prove the main result of 
this section.

\begin{theorem}
\label{thm:gen res}
Let $B$ be a connected, finite-dimensional, graded vector space over 
an algebraically closed field $\k$.  For each $i\ge 1$, let 
\begin{equation}
\label{eq:zerores}
U^i_B=\{ A \in P_B \mid \RR^i_1(A)\subseteq \{0\} \}
\end{equation}
be the set of commutative graded algebras whose underlying 
graded vector space is $B$ and whose degree $i$ 
resonance varieties are trivial.  Then $U^i_B$ is 
a Zariski open subset of $P_B$.
\end{theorem}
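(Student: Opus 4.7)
My plan is to show that the complement $P_B \setminus U^i_B$ is Zariski closed, by exhibiting it as the image of a closed incidence variety under a proper projection. The key observation is that every $A \in P_B$ has the same underlying graded vector space $B$, so elements of $A^1$ are literally elements of $B^1$, and the multiplication $\mu_A$ varies polynomially with the coordinates of $A$ inside the ambient space $V_B$ defined in \eqref{eq:hombi}.

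I form the incidence variety
\[
I = \bigl\{(A,[a]) \in P_B \times \PP(B^1) \bigm| a^2=0 \text{ in } A \text{ and } \dim_\k H^i(A,\delta(a)) \ge 1\bigr\}
\]
and claim it is Zariski closed. The equation $a^2 = \mu_A(a \otimes a) = 0$ is bi-homogeneous of bidegree $(1,2)$ in $(A,a)$. Arguing as in Lemma \ref{lem:res closed}, the inequality $\dim_\k H^i(A,\delta(a)) \ge 1$ is the rank-drop condition $\rank \delta^{i-1}(a) + \rank \delta^i(a) \le b_i - 1$, with $b_i = \dim_\k B^i$, which is cut out by the vanishing of all $b_i$-sized minors of the block matrix $\delta^{i-1}(a) \oplus \delta^i(a)$; the entries of this matrix are linear in the coordinates of $A$ and linear in $a$, so the minors are polynomial in $(A,a)$ and homogeneous in $a$. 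Homogeneity in $a$ lets these equations descend to a closed subvariety of $P_B \times \PP(B^1)$.

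Now $\PP(B^1)$ is a complete (projective) variety, so the projection $\pi\colon P_B \times \PP(B^1) \to P_B$ is a closed map; hence $\pi(I)$ is Zariski closed in $P_B$. To finish, I identify $\pi(I) = P_B \setminus U^i_B$: Lemma \ref{lem:res closed} gives that $\RR^i_1(A)$ is a cone in $B^1$, so $\RR^i_1(A) \not\subseteq \{0\}$ if and only if some $[a] \in \PP(B^1)$ satisfies $(A,[a]) \in I$, if and only if $A \in \pi(I)$.

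The main (and essentially only) obstacle is the bookkeeping in constructing $I$: one must verify that the square $a^2$ and the differentials $\delta^\bullet(a)$ depend polynomially on the \emph{pair} $(A,a)$, not merely on $a$ for fixed $A$, and then check homogeneity in $a$ so that $I$ descends to $P_B \times \PP(B^1)$ rather than only a fiberwise family over $P_B$. Once this is in hand, the completeness of projective space delivers the openness of $U^i_B$ immediately, with no further commutative-algebra input needed.
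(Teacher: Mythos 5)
Your argument is correct and is essentially the paper's own proof: both form the incidence variety in $P_B\times\PP(B^1)$ cut out by $a^2=0$ and the rank-drop (minor-vanishing) conditions on $\delta^{i-1}(a)\oplus\delta^i(a)$, observe it is closed by bi-homogeneity, and conclude via completeness of $\PP(B^1)$ that the image of the projection to $P_B$ is closed, hence $U^i_B$ is open. Your explicit bookkeeping of the bidegrees simply spells out what the paper asserts with ``A look at the proof of Lemma~\ref{lem:res closed} shows that this set is defined by bi-homogeneous equations.''
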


\begin{proof}
We need to show that, for each $i\ge 1$, the set 
\begin{equation}
\label{eq:nonzerores}
C^i_B=\{ A \in P_B \mid \text{there is $a\in B^1\setminus \{0\}$ 
such that $H^i(A,\delta(a)) \ne 0$}\}
\end{equation}
is Zariski closed.
To that end, consider the set 
\begin{equation}
\label{eq:tib}
\{ (A,a) \in P_B \times B^1 \mid a^2=0 \text{ and } H^i(A,\delta(a)) \ne 0\}.
\end{equation}

A look at the proof of Lemma \ref{lem:res closed} 
shows that this set is defined by bi-homogeneous 
equations. In particular, the same equations define a 
Zariski closed subset $T^i_B$ inside $P_B\times \PP(B^1)$.

Now consider the first-coordinate projection map 
$p\colon P_B\times \PP(B^1) \to P_B$.  Since $\PP(B^1)$  
is a complete variety, $p$ is a closed 
map.  On the other hand, $p(T^i_B)=C^i_B$, 
and this completes the proof.
\end{proof}

\begin{remark}
\label{rem:grass}
The approach from \cite{PS-resrep}, where we consider 
connected, finite-dimensional cgas $A$ 
over $\C$ with $A^{>2}=0$ and we  fix the dimensions of $A^1$ and 
$\ker(A^1\wedge A^1\to A^2)$,  
shows exactly when the vanishing resonance condition in degree $1$ is generic. 
Theorem 1.1 from \cite{PS-resrep} implies that 
$U^1_B \ne \emptyset$ if and only if $b_2 \ge 2b_1 -3$, for an
arbitrary connected, finite-dimensional graded $\C$--vector space $B$. 
\end{remark}

\section{The characteristic and resonance varieties of a CW-complex}
\label{sect:jump cw}

In this section, we present a topological context in which the 
support varieties, homology jump loci, and resonance varieties 
arise, and reduce their computation to the case of finite CW-complexes.

\subsection{Homology with twisted coefficients}
\label{subsec:coeff}

Let $X$ be a connected CW-complex.  
Without loss of generality, we may assume $X$ has a single 
$0$-cell, call it $x_0$.  Let $\pi=\pi_1(X,x_0)$ be the fundamental 
group of $X$, based at $x_0$. 

Let $p\colon \wX \to X$ be the universal cover of our CW-complex.   
The cell structure on $X$ lifts to a cell structure on $\wX$.  
Fixing a lift $\tilde{x}_0\in p^{-1}(x_0)$ identifies 
the fundamental group of $X$ with the 
group of deck transformations of $\wX$, which 
permute the cells.  Therefore, we may view the 
cellular cell complex $C_{\bullet}(\wX, \Z)$, 
with differential $\tilde\partial$, as a chain complex 
of free left modules over the group ring $\Z{\pi}$.  

Given a right $\Z{\pi}$-module $M$, consider the chain complex 
$C_{\bullet}(X,M):=M\otimes_{\Z{\pi}} C_{ \bullet}(\wX, \Z)$, 
with differential $ \id_M\otimes \tilde\partial$.  
The homology groups of $X$ with coefficients 
in $M$ are then defined as $H_i(X,M):= H_i(C_{\bullet}(X,M))$. 

Noteworthy is the following situation. 
Let $X^{\ab}\to X$ be the universal abelian cover of $X$, 
with group of deck transformations $\pi_{\ab}=H_1(X,\Z)$. 
The cellular chain complex $(C_{\bullet}(X^{\ab},\Z),\partial)$ is 
a chain complex of free modules over the commutative 
ring $\Z[\pi_{\ab}]$.  The homology groups 
$H_i(X^{\ab},\Z)=H_i(X, \Z[\pi_{\ab}])$ 
are $\Z[\pi_{\ab}]$-modules, called the 
{\em Alexander invariants}\/ of $X$.

\subsection{Homology jump loci}
\label{subsec:cvs}

Fix a field $\k$, and let  $\Hom(\pi,\k^{\times})$ be the group 
of characters of $\pi$, with values in the multiplicative group of 
units of $\k$.  Since $\k^{\times}$ is an abelian group, every 
character factors through the abelianization $\pi_{\ab}$, and so 
we may identify $\Hom(\pi,\k^{\times})$ with $\Hom(\pi_{\ab},\k^{\times})$. 
Given a homomorphism 
$\rho\colon \pi\to \k^{\times}$, let $\k_{\rho}$ be the rank~$1$ 
local system on $X$ defined by $\rho$, and let $H_*(X, \k_{\rho})$ 
be the resulting twisted homology groups. 

There are three types of (co)homology jumping loci traditionally 
associated to a CW-complex $X$ as above. First, the 
{\em characteristic varieties}\/ of $X$ (with coefficients in $\k$) 
are the sets
\begin{equation}
\label{eq:char var}
\VV^i_d(X,\k) = \{\rho \in \Hom(\pi,\k^{\times}) \mid 
\dim_{\k} H_i(X, \k_{\rho}) \ge d\}.
\end{equation}

Second, the {\em Alexander varieties}\/ of $X$ (with coefficients 
in $\k$) are the supports of the exterior powers of the 
Alexander invariants of $X$,
\begin{equation}
\label{eq:alex var}
\WW^i_d(X,\k) = \supp \Big(\bigwedge^d H_i(X^{\ab},\k)\Big). 
\end{equation}

And finally, the {\em resonance varieties}\/ of $X$ (with coefficients 
in $\k$) are the jumping loci associated to the cohomology 
algebra $H^*(X,\k)$,
\begin{equation}
\label{eq:res vars}
\RR^i_d(X,\k) = \{ a \in H^1(X,\k) \mid a^2=0 \text{ and }
\dim _{\k} H^i(H^*(X,\k),\delta(a))\ge d\}.
\end{equation}

In absolute generality, there is not much structure on these sets.  
To remedy this situation, we need to impose some finiteness  
restrictions on $X$ in order to be able to say more about its jump loci. 
To start with, let us assume that $X$ has finite $1$-skeleton, 
and $\k$ is algebraically closed. Then the fundamental group 
$\pi=\pi_1(X,x_0)$ is finitely generated, and the character group 
$\Hom(\pi,\k^{\times})$ is an affine algebraic group, with coordinate 
ring the group algebra of the abelianization, $S=\k[\pi_{\ab}]$.  
 
Every character $\rho\colon \pi_{\ab} \to \k^{\times}$ extends 
$\k$-linearly to a ring morphism, $\bar\rho\colon \k[\pi_{\ab}] \to \k$, 
and thus gives rise to a maximal ideal $\m=\ker(\bar\rho)$ of $S$. 
Conversely, since $\k$ is algebraically closed, each maximal 
ideal $\m\in S$ determines a character 
$\rho\colon \pi_{\ab} \to \k^{\times}$.  Thus, 
\begin{equation}
\label{eq:specm}
\specm(\k[\pi_{\ab}]) =\Hom(\pi_{\ab},\k^{\times})=\Hom(\pi,\k^{\times}).
\end{equation}

Let $C_{\bullet} = (C_{\bullet}(X^{\ab},\k),\partial)$ be the equivariant 
chain complex of the universal abelian cover, with coefficients in $\k$. 
It is clear from the definitions that 
$\VV^i_d(X,\k)=\VV^i_d(C_{\bullet})$ and 
$\WW^i_d(X,\k)=\WW^i_d(C_{\bullet})$.  

Now suppose  that $X$ has finite $k$-skeleton, 
for some $k\ge 1$.  Then, for each $i<k$ and $d>0$, 
the sets $\WW^i_d(X,\k)$ and $\VV^i_d(X,\k)$ 
are Zariski closed subsets of the character group 
$\Hom(\pi,\k^{\times})$, either by definition, for the former, 
or by Lemma \ref{lem:zcv}, for the latter. Likewise, 
the sets $\RR^i_d(X,\k)$ are Zariski closed subsets 
of the affine space $H^1(X,\k)$, by Lemma \ref{lem:res closed}.  
In fact, as we shall see next, these statements also hold for $i=k$. 

\subsection{Reducing to the finite-dimensional case}
\label{subsec:finite dim}

For the purpose of computing resonance varieties or 
homology with certain twisted coefficients, in the 
finiteness range of $X$, the following proposition allows 
us to replace $X$ by a {\em finite}\/ CW-complex.

\begin{prop}
\label{prop:skeleton}
Let $X$ be a connected CW-complex with finite $k$-skeleton, 
for some $k\ge 1$.  There exists then a finite CW-complex $Y$ 
of dimension at most $k+1$, with $Y^{(k)}=X^{(k)}$, and a map 
$f\colon Y\to X$ such that the following hold.
\begin{enumerate}
\item \label{f1}  The induced homomorphism, 
$f_*\colon H_1(Y,\Z)\to H_1(X,\Z)$, is an isomorphism.

\item \label{f2}  For every commutative ring $R$, 
and for every homomorphism $\rho\colon H_1(X,\Z) \to R^{\times}$, 
the induced homomorphism, 
$f_*\colon H_i(Y, R_{\rho\circ f_*})\to H_i (X,R_{\rho})$, 
is an $R$-module isomorphism, for all $i\le k$.

\item \label{f3}  The isomorphism $f^*\colon \Hom(H_1(X,\Z),\k^{\times}) 
\to  \Hom(H_1(Y,\Z),\k^{\times})$ 
restricts to isomorphisms $\WW^i_d(X,\k) \to \WW^i_d(Y,\k)$,   
for all $i\le k$. 

\item \label{f4}  The isomorphism $f^*\colon \Hom(H_1(X,\Z),\k^{\times}) 
\to  \Hom(H_1(Y,\Z),\k^{\times})$ 
restricts to isomorphisms $\VV^i_d(X,\k) \to \VV^i_d(Y,\k)$,   
for all $i\le k$. 

\item \label{f5}  The isomorphism $f^*\colon H^1(X,\k)\to H^1(Y,\k)$ 
restricts to isomorphisms $\RR^i_d(X,\k) \to \RR^i_d(Y,\k)$,  
for all $i\le k$. 
\end{enumerate}
\end{prop}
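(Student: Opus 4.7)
\emph{Construction of $Y$.} Realize $Y$ as a CW-subcomplex of $X$ obtained by keeping the full $k$-skeleton $Y^{(k)}=X^{(k)}$ and attaching a finite collection of $(k+1)$-cells from $X$, with $f\colon Y\hookrightarrow X$ the inclusion. Set $G=H_1(X,\Z)$ and $S=\Z[G]$. Since $G$ is finitely generated abelian, $S$ is Noetherian, and since $X^{(k)}$ is finite, $C_k(X^{\ab})$ is a finitely generated free $S$-module, so its submodule $\im\partial^X_{k+1}$ is finitely generated over $S$. Pick a finite set $T$ of $(k+1)$-cells of $X$ whose lifted boundaries $S$-generate $\im\partial^X_{k+1}$; we may need to enlarge $T$ further (still finitely) to handle the $i=k$ case of (5) below. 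Let $Y=X^{(k)}\cup\bigcup_{e\in T}e$ with the attaching maps of $X$.

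\emph{Proofs of (1)--(4).} Let $\widetilde Y\subseteq X^{\ab}$ be the preimage of $Y$; it is a regular $G$-cover of $Y$, and $C_*(\widetilde Y)$ sits inside $C_*(X^{\ab})$ as a subcomplex of $S$-modules, agreeing with it in all degrees $\le k$. By the choice of $T$, one has $\im\partial^{\widetilde Y}_{k+1}=\im\partial^X_{k+1}$ as $S$-submodules of $C_k(X^{\ab})$. Hence, for every $S$-module $M$, the chain complexes $M\otimes_S C_*(\widetilde Y)$ and $M\otimes_S C_*(X^{\ab})$ have isomorphic homology through degree~$k$. Taking $M=\Z$ with trivial $G$-action yields $f_*\colon H_i(Y,\Z)\cong H_i(X,\Z)$ for $i\le k$, proving (1) and identifying $\widetilde Y$ with the universal abelian cover $Y^{\ab}$; taking $M=R$ with $\rho$-twisted $G$-action proves (2). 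Statement (3) follows from the $\k[G]$-module isomorphism $H_i(Y^{\ab},\k)\cong H_i(X^{\ab},\k)$ for $i\le k$ (so the supports of exterior powers match), and (4) follows from (2) applied to rank-one local systems over $\k$.

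\emph{Proof of (5) and main obstacle.} For $i\le k-1$, (2) with $R=\k$ together with universal coefficients over a field yields a ring isomorphism $f^*\colon H^j(X,\k)\cong H^j(Y,\k)$ for each $j\le k$, so the Aomoto complex in degrees $i-1,i,i+1$ coincides for $X$ and $Y$, giving $\RR^i_d(X,\k)=\RR^i_d(Y,\k)$ under $f^*$. The delicate case is $i=k$: there, $\ker(\cup a\colon A^k\to A^{k+1}_X)$ always embeds into $\ker(\cup a\colon A^k\to A^{k+1}_Y)$ under $f^*$, with equality for every $a\in H^1$ iff $f^*$ is injective on the decomposable subspace $A^1\cdot A^k\subseteq H^{k+1}(X,\k)$. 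This subspace is finite-dimensional, since $X^{(k)}$ finite makes both $H^1(X,\k)$ and $H^k(X,\k)$ finite-dimensional. The long exact sequence of the pair $(X,X^{(k+1)})$ gives the injection $H^{k+1}(X,\k)\hookrightarrow H^{k+1}(X^{(k+1)},\k)$, and the kernel of $H^{k+1}(X,\k)\to H^{k+1}(Y,\k)$ decreases monotonically as we enlarge the set of $(k+1)$-cells in $Y$; by finite-dimensionality of $A^1\cdot A^k$, this descending chain stabilizes at zero after finitely many additional $(k+1)$-cells are adjoined to $T$. Since adding cells only enlarges $C_{k+1}(\widetilde Y)$ while $\im\partial^{\widetilde Y}_{k+1}$ remains equal to $\im\partial^X_{k+1}$, the enlargement preserves (1)--(4), and all five parts hold simultaneously. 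The \emph{main obstacle} is precisely this harmonization at $i=k$: one must couple the equivariant-generation requirement for parts (1)--(4) with an injectivity condition on the decomposable subspace for (5); both being finite-in-nature (Noetherianness of $S$; finite-dimensionality of $A^1\cdot A^k$) is what makes a single finite $T$ possible.
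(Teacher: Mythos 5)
Your proof is correct, and the construction is essentially the same as the paper's: take $Y=X^{(k)}$ together with finitely many $(k+1)$-cells, first enough to $S$-generate $\im\partial^{X^{\ab}}_{k+1}$ (which gives (1)--(4)), then finitely many more so that $f^*$ is injective on the decomposable subspace $A^1\cdot A^k=\im(\cup^{k+1}_X)$ (which gives (5)). The one place the two arguments diverge is in justifying that finitely many additional cells achieve this second requirement: the paper does it constructively, by choosing a $\k$-basis of $\im(\nabla^X_{k+1})$, lifting it to cycle representatives $z_1,\dots,z_s\in C_{k+1}(X,\k)$, and adjoining the finitely many $(k+1)$-cells appearing in those cycles, so that the map $\im(\nabla^Y_{k+1})\to\im(\nabla^X_{k+1})$ becomes surjective and dually $f^*$ is injective on $\im(\cup^{k+1}_X)$. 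You instead argue abstractly that the descending chain of subspaces $\ker\big(f_Y^*\big)\cap(A^1\cdot A^k)$ stabilizes at zero because $A^1\cdot A^k$ is finite-dimensional. That is valid, but the step $\bigcap_Y\ker\big(H^{k+1}(X,\k)\to H^{k+1}(Y,\k)\big)=0$ deserves one more line: over a field, $H^{k+1}(X^{(k+1)},\k)\cong\varprojlim_Y H^{k+1}(Y,\k)$ by universal coefficients applied to $H_{k+1}(X^{(k+1)},\k)=\varinjlim_Y H_{k+1}(Y,\k)$, which combined with your cited injection $H^{k+1}(X,\k)\hookrightarrow H^{k+1}(X^{(k+1)},\k)$ gives the vanishing intersection. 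Finally, a small slip: the ``iff'' in your reduction of (5) should be only an ``if'' --- injectivity of $f^*$ on each $a\cdot A^k$ does not in general imply injectivity on their span $A^1\cdot A^k$ --- but since you only use the ``if'' direction this does not affect the argument. The paper's explicit construction and your stabilization argument buy essentially the same thing; the paper's is slightly more self-contained, yours is slightly shorter.
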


\begin{proof}
The cellular chain complex $(C_i(X^{\ab}),\partial^{\ab}_i)_{i\ge 0}$ 
is a chain complex of free modules over the (commutative) 
Noetherian ring $S=\Z[{\pi}_{\ab}]$.  Since $C_k(X^{\ab})$ is 
finitely generated as an $S$-module, the $S$-submodule 
$B_k(X^{\ab})=\im (\partial^{\ab}_{k+1})$ is also finitely 
generated, let's say, by the images of the $(k+1)$-cells 
$e_1,\dots, e_r$. Set 
\[
Z:=X^{(k)} \cup e_1 \cup \dots \cup e_r.
\]

Clearly, $Z$ is a finite subcomplex of $X^{(k+1)}$; 
let $g\colon Z\to X$ be the inclusion map. 
Consider the cup-product map, 
$\cup^{k+1}_X\colon H^1(X,\k)\otimes H^k(X,\k)\to H^{k+1}(X,\k)$. 
Passing to $\k$-duals, we obtain the comultiplication map, 
$\nabla_{k+1}^X \colon H_{k+1}(X,\k) \to H_1(X,\k)\otimes H_k(X,\k)$. 

The dual of $\im (\cup_X)$ may be identified with $\im(\nabla_X)$. 
Proceeding in the same fashion with the space $Z$, and 
comparing the resulting maps via the induced homomorphism 
$g_*\colon H_*(Z,\k)\to H_*(X,\k)$, we obtain the following 
commuting diagram:
\begin{equation}
\label{eq:nabla diag}
\xymatrix{
H_{k+1}(Z,\k) \ar@{->>}[r] \ar^{g_*}[d] \ar@/^1.5pc/^{\nabla_{k+1}^Z} [rr]
& \im(\nabla_{k+1}^Z) \ar@{^{(}->}[r] \ar^{g_*}[d]  
& H_1(Z,\k)\otimes H_k(Z,\k) \ar^{g_*\otimes g_*}[d] \\
H_{k+1}(X,\k) \ar@{->>}[r] \ar@/_2pc/^{\nabla_{k+1}^X} [rr] 
& \im(\nabla_{k+1}^X) 
\ar@{^{(}->}[r] & H_1(X,\k)\otimes H_k(X,\k).
}
\end{equation}
\smallskip

In general, the vertical arrow in the center of the above 
diagram is not surjective. So pick a (finite) $\k$-basis, 
$x_1, \dots , x_s$ for $ \im(\nabla_{k+1}^X)$, 
lift those homology classes back to 
$y_1=[z_1],\dots ,y_s=[z_s] \in H_{k+1}(X,\k)$, 
and represent the cycles $z_1,\dots ,z_s$ as $\k$-linear 
combinations of finitely many $(k+1)$-cells of $X$, say, 
$e_{r+1},\dots, e_{t}$. Then, the CW-complex 
\[
Y:=Z \cup e_{r+1}\cup \dots \cup e_{t}
\]
is again a finite subcomplex of $X^{(k+1)}$; let $f\colon Y\to X$ 
be the inclusion map.  Redrawing diagram \eqref{eq:nabla diag} 
with $Z$ replaced by $Y$, $g_*$ by $f_*$, etc, we see 
that the middle arrow is now surjective.  Hence, the 
dual map, $f^*\colon \im (\cup^{k+1}_X) \to \im(\cup^{k+1}_Y)$, 
is injective.

Clearly, $f_*\colon H_1(Y,\Z)\to H_1(X,\Z)$ is an isomorphism, 
thus proving part \eqref{f1}, and showing that the maps $f^*$ 
in parts \eqref{f3}--\eqref{f5} are also isomorphisms. 

By construction, the map $f_*\colon H_i(Y,R)\to H_i(X,R)$
from part \eqref{f2}  
is an $R$-isomorphism, for  all $i\le k$, since 
$\rho$ corresponds to a change-of-rings map, $S\to R$. 
Furthermore, parts \eqref{f3} and \eqref{f4} are 
now a direct consequence of part \eqref{f2}.

Also by construction, the map $f^*\colon H^1(X,\k)\to H^1(Y,\k)$
restricts to isomorphisms $\RR^i_d(X,\k) \isom \RR^i_d(Y,\k)$,  
for all $i\le k$ and all $d>0$.  This finishes the proof of part \eqref{f5}.
\end{proof}

The following corollary is now immediate. 

\begin{corollary}
\label{cor:vwr}
Let $X$ be a connected CW-complex with finite $k$-skeleton, 
for some $k\ge 1$.  Then, for each $i\le k$ and $d>0$, 
the sets $\WW^i_d(X,\k)$ and $\VV^i_d(X,\k)$ 
are subvarieties of the character group 
$\Hom(\pi,\k^{\times})$, while 
the sets $\RR^i_d(X,\k)$ are subvarieties 
of the affine space $H^1(X,\k)$.
\end{corollary}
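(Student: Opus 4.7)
The plan is to deduce the corollary directly from Proposition \ref{prop:skeleton} by replacing $X$ with the finite CW-complex $Y$ it furnishes, applying the already-established closedness results (Lemmas \ref{lem:zcv} and \ref{lem:res closed}) on $Y$, and then transporting the conclusion back to $X$ via the isomorphisms from parts \eqref{f3}--\eqref{f5} of the proposition.

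More concretely, I would first invoke Proposition \ref{prop:skeleton} to produce a finite CW-complex $Y$ of dimension at most $k+1$ together with a map $f\colon Y\to X$. Because $Y$ is finite, the equivariant cellular chain complex $C_{\bullet}(Y^{\ab},\k)$ is a chain complex of \emph{free, finitely generated} modules over the Noetherian ring $S=\k[\pi_1(Y)_{\ab}]=\k[\pi_{\ab}]$ in every degree $0,1,\dots,k+1$ (one can truncate above degree $k+1$ without affecting $H_i$ or the jump loci for $i\le k$). Applying Lemma \ref{lem:zcv} to this chain complex shows that $\VV^i_d(Y,\k)$ is Zariski closed in $\specm(S)=\Hom(\pi_{\ab},\k^{\times})$ for all $i\le k$ and $d>0$. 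The sets $\WW^i_d(Y,\k)$ are Zariski closed by formula \eqref{eq:vann}, since they are supports of finitely generated $S$-modules.

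For resonance, since $Y$ has finite dimension, the cohomology algebra $A=H^*(Y,\k)$ is locally finite. Lemma \ref{lem:res closed} then yields that $\RR^i_d(Y,\k)=\RR^i_d(A)$ is a Zariski closed cone in $A^1=H^1(Y,\k)$ for every $i,d$.

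Finally, parts \eqref{f3}, \eqref{f4}, and \eqref{f5} of Proposition \ref{prop:skeleton} say that the maps $f^*\colon \Hom(\pi_{\ab},\k^{\times})\to \Hom(H_1(Y,\Z),\k^{\times})$ and $f^*\colon H^1(X,\k)\to H^1(Y,\k)$ are isomorphisms of algebraic varieties which identify $\WW^i_d(X,\k)$, $\VV^i_d(X,\k)$, and $\RR^i_d(X,\k)$ with the corresponding loci for $Y$. Pulling back along these isomorphisms preserves the property of being a Zariski closed subvariety, so the three families of loci for $X$ are subvarieties of $\Hom(\pi,\k^{\times})$ (respectively $H^1(X,\k)$) in the range $i\le k$, $d>0$. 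There is no real obstacle here; the only subtlety is the small bookkeeping point that the degree-$(k+1)$ differential of $C_{\bullet}(Y^{\ab},\k)$ matters for the case $i=k$, which is exactly why the construction of $Y$ in the proof of Proposition \ref{prop:skeleton} attaches the extra $(k+1)$-cells $e_{r+1},\dots,e_t$.
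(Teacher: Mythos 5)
Your proof is correct and takes essentially the same route the paper intends: the paper calls the corollary "immediate" after Proposition \ref{prop:skeleton}, and the intended argument is precisely to replace $X$ by the finite complex $Y$, apply Lemma \ref{lem:zcv}, formula \eqref{eq:vann}, and Lemma \ref{lem:res closed} there, and transport back via the isomorphisms in parts \eqref{f3}--\eqref{f5}.
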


Using Theorem \ref{thm:vv ww} and Proposition \ref{prop:skeleton}, 
we obtain the following result, which recovers Corollary 3.7 
from \cite{PS-plms}. 

\begin{corollary}
\label{cor:vv ww}
Let $X$ be a connected CW-complex with finite $k$-skeleton, 
for some $k\ge 1$.  Then, for all integers $q\le k$,
\begin{equation}
\label{eq:vwx}
\bigcup_{i\le q} \VV^i_1(X,\k) = \bigcup_{i\le q} \WW^i_1(X,\k).
\end{equation}
\end{corollary}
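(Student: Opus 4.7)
The plan is to combine Theorem \ref{thm:vv ww} with the reduction-to-finite-complexes afforded by Proposition \ref{prop:skeleton}. The obstacle to applying Theorem \ref{thm:vv ww} directly to $X$ is that it requires a chain complex of free, \emph{finitely generated} $S$-modules, whereas the cellular chain complex $C_\bullet(X^{\ab},\k)$ over $S=\k[\pi_{\ab}]$ may fail to be finitely generated above degree $k$ when only the $k$-skeleton of $X$ is finite. Passing to a finite model of $X$ repairs this defect in the relevant range of degrees.

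Concretely, I first invoke Proposition \ref{prop:skeleton} to produce a finite CW-complex $Y$ of dimension at most $k+1$ together with a map $f\colon Y\to X$ inducing an isomorphism $f_*\colon H_1(Y,\Z)\isom H_1(X,\Z)$, and so that the induced map on character groups restricts to bijections $\VV^i_1(X,\k)\isom \VV^i_1(Y,\k)$ and $\WW^i_1(X,\k)\isom \WW^i_1(Y,\k)$ for every $i\le k$ (parts \eqref{f3} and \eqref{f4} of that proposition). It therefore suffices to prove the corresponding equality for $Y$.

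Set $S=\k[\pi_{\ab}]$, where $\pi=\pi_1(Y,y_0)$, and let $E=(C_\bullet(Y^{\ab},\k),\partial)$ be the cellular chain complex of the universal abelian cover. Because $Y$ is a finite CW-complex, $E$ is a bounded chain complex of free, finitely generated $S$-modules. By the identifications recorded in \S\ref{subsec:cvs}, we have $\VV^i_1(Y,\k)=\VV^i_1(E)$ and $\WW^i_1(Y,\k)=\WW^i_1(E)$ for all $i\ge 0$. Thus Theorem \ref{thm:vv ww} applies directly to $E$ and yields
$$\bigcup_{i\le q}\VV^i_1(E)=\bigcup_{i\le q}\WW^i_1(E)$$
for every integer $q$; specializing to $q\le k$ and transferring the equality back to $X$ through the isomorphism $f^*$ of Proposition \ref{prop:skeleton} completes the argument.

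No genuinely new difficulty arises: the substantive work has already been carried out in Theorem \ref{thm:vv ww} (via the K\"unneth spectral sequence and Nakayama's Lemma) and in Proposition \ref{prop:skeleton} (the explicit construction of $Y$ by adjoining finitely many $(k+1)$-cells to $X^{(k)}$). The only point that requires care is verifying that the identification of the topological jump loci of $Y$ with the algebraic jump loci of the $S$-chain complex $E$ is compatible with the bijections induced by $f^*$, so that the purely algebraic conclusion of Theorem \ref{thm:vv ww} can be pushed back to the original CW-complex $X$.
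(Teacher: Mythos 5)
Your argument is exactly the one the paper intends: pass from $X$ to the finite model $Y$ supplied by Proposition~\ref{prop:skeleton}, whose parts \eqref{f3} and \eqref{f4} identify the jump loci of $X$ and $Y$ in degrees $\le k$, then apply Theorem~\ref{thm:vv ww} to the free, finitely generated $\k[\pi_{\ab}]$-chain complex $C_\bullet(Y^{\ab},\k)$ and transport back. This matches the paper's own (one-line) derivation.
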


\section{The equivariant spectral sequence}
\label{sec:equiv ss}

We now relate the homology jump loci associated 
to the first page of the equivariant spectral sequence of  
a space to the resonance varieties of its cohomology 
algebra.

\subsection{The spectrum of the associated graded ring of a group ring}
\label{subsec:jumps ss}

Let $G$ be a finitely generated abelian group, and let 
$\k{G}$ be the group-ring over an algebraically closed field $\k$.  
The powers of the augmentation ideal, 
$J=\ker(\k{G}\xrightarrow{\epsilon}\k)$, define 
a descending filtration on $\k{G}$. The associated 
graded ring, $S^{\hdot}=\gr_{J}^{\hdot}(\k{G})$, is an affine 
$\k$-algebra, whose maximal spectrum (endowed with the 
Zariski topology) we denote by $\specm(S)$.

Let $\bar{G}=G/\text{tors}(G)$ be the maximal torsion-free 
quotient of $G$, and let  $\bar{J}$ be the augmentation ideal 
of $\k{\bar{G}}$. The associated graded ring, 
$\bar{S}=\gr_{\bar{J}}(\k{\bar{G}})$, may be identified 
with the polynomial ring $\k[x_1,\dots, x_r]$, where $r=\rank(\bar{G})$. 
Consequently, $\specm(\bar{S})$ may be identified with 
the affine space $H^1(\bar{G},\k)=\k^r$.

The natural projection $\phi\colon G\surj \bar{G}$ extends to a 
ring epimorphism $\phi\colon S\surj \bar{S}$. In general, this  
morphism is not injective.  For instance, if $\ch \k=p$ and 
$G=\Z/p^s\Z$, then $\k{G}= \k[t]/(t^{p^s}-1)= \k[t]/(t-1)^{p^s}$, 
and thus $S=\k[x]/(x^{p^s})$, whereas $\bar{S}=\k$.  
Passing to maximal spectra, though, fixes this fat point issue.

Let $\nil (S)$ be the nilradical of the ring $S$, i.e., the intersection 
of all its prime ideals. 

\begin{lemma}
\label{lem:sbars}
The kernel of the epimorphism $\phi\colon S\surj \bar{S}$ is 
equal to $\nil (S)$.   Consequently, $\phi$ induces a homeomorphism 
$\specm(\bar{S}) \to \specm(S)$. 
\end{lemma}

\begin{proof}
First suppose that $G=G_1\times G_2$.  
Recall that the group-ring of a direct product of groups 
is canonically isomorphic to the tensor product of the group-rings 
of the factors.  Thus, we have an isomorphism of $\k$-algebras, 
$\alpha\colon \k{G_1} \otimes_{\k} \k{G_2} \isom \k{G}$.  
Now let $J\subset \k{G}$ and $J_i\subset \k{G_i}$ be the respective 
augmentation ideals.   A standard inductive argument shows that 
\begin{equation}
\label{eq:alpha}
\alpha\Big( \sum_{s+t = n}  J_1^s \otimes J_2^t\Big)=J^n,
\end{equation}
for all $n\ge 0$. Hence, the map $\alpha$ 
induces an isomorphism of (connected) graded $\k$-algebras, 
$\gr(\alpha)\colon S_1^{\hdot} \otimes_{\k} S_2^{\hdot} \isom S^{\hdot}$.
Applying the same argument to the decomposition 
$\bar{G}=\bar{G}_1\times \bar{G}_2$, we obtain  
an isomorphism 
$\gr(\bar\alpha)\colon \bar{S}_1^{\hdot}  \otimes_{\k} \bar{S}_2^{\hdot}
\isom \bar{S}^{\hdot}$ which fits into the commuting diagram 
\begin{equation}
\label{eq:specs}
\xymatrixcolsep{30pt}
\xymatrixrowsep{22pt}
\xymatrix{
S_1 \otimes_{\k} S_2 \ar^(.6){\gr(\alpha)}[r] \ar[d]& S\ar[d]\\
\bar{S}_1 \otimes_{\k} \bar{S}_2 
\ar^(.6){\gr(\bar\alpha)}[r] & \,\bar{S} \, .
}
\end{equation}

Taking $G_1=\bar{G}$ and $G_2= \Tors (G)$, we infer from \eqref{eq:specs}
that $\phi$ may be identified with 
$\id \otimes \epsilon \colon \bar{S} \otimes S_2 \to \bar{S} \otimes \k$.
In particular, $\ker (\phi)$ is identified with $\bar{S} \otimes S_2^+$. 

Plainly, $\nil (\bar{S})=0$, and so $\nil (\bar{S} \otimes S_2) \subseteq \ker (\phi)$.  
We have to prove the reverse inclusion. Note that $S_2$ is a tensor product
of graded algebras of the form $R^{\hdot}= \gr_{J}^{\hdot} \k (\Z/p^s\Z)$,
for some prime $p$ and integer $s\ge 1$. Clearly, it is enough to show that 
$R^+\subseteq \nil (R)$. 

If $p\ne \ch(\k)$, then $R^1=(\Z/p^s\Z) \otimes \k$ 
vanishes, and thus $R^+=0$.  If $p= \ch(\k)$, then 
$R=\k[x]/(x^{p^s})$, and thus $R^+\subseteq \nil (R)$. 
In either case, the desired conclusion holds, and we are done.
\end{proof}

\subsection{The first page of the equivariant spectral sequence}
\label{subsec:equiv ss}
Let $X$ be a connected, finite-type CW-complex, 
and fix a coefficient field $\k$.  As before, let $\cup_X$ be 
the cup product map in $H^*(X,\k)$, and let $\nabla^X$ be the 
co-multiplication map in $H_*(X,\k)$.   

Next, let $\nu\colon \pi\surj G$ be an epimorphism from the 
fundamental group $\pi=\pi_1(X,x_0)$ to an abelian group $G$ 
(necessarily, $G$ must be finitely generated).  Let $\k{G}$ be 
the group-ring of $G$, and let $\widehat{\k{G}}$ be the completion 
of this ring with respect to the filtration by powers of the augmentation 
ideal. Composing the completion map $\k{G}\to \widehat{\k{G}}$ 
with the extension of $\nu$ to group rings yields a ring morphism, 
$\hat{\nu}\colon \k{\pi}\to \widehat{\k{G}}$.  Clearly, this morphism 
makes the completion $\widehat{\k{G}}$ into a module 
over $\k{\pi}$. 

Following the setup from \cite{PS-tams}, let
\begin{equation}
\label{eq:e1 page}
E=(E^1(X,\widehat{\k{G}}),d^1)
\end{equation}
be the first page of the {\em equivariant spectral sequence} of $X$ 
with coefficients in $\widehat{\k{G}}$.   This is a chain complex 
of  free, finitely-generated modules over the
affine algebra $S=\gr_J(\k{G})$.  The $i$-th term of this chain complex 
is 
\begin{equation}
\label{eq:e1i}
E_i=S\otimes_{\k} H_i(X,\k),
\end{equation}
while the $i$-th differential, 
$d^1_i\colon E_i\to E_{i-1}$, when restricted to the generating set 
$\{1\}\otimes H_i(X, \k)$, is the composite
\begin{equation}
\label{eq:d1}
\xymatrixcolsep{28pt}
d^1_i\colon\! \xymatrix{H_i(X, \k) \ar^(.36){\nabla^X_i}[r] 
& H_1(X, \k)\otimes_{\k} H_{i-1}(X, \k)
\ar^{\nu_{*} \otimes \id}[r] & H_1(G, \k)\otimes_{\k} H_{i-1}(X, \k)}.
\end{equation}

By definition, the transpose of $\nabla^X_i$ is the cup-product map 
$\cup_X^i$.   Thus, the transpose of $d^1_i$ is the composite
\begin{equation}
\label{eq:d1 tr}
\xymatrixcolsep{28pt}
(d^1_i)^{\top} \colon\! \xymatrix{
H^1(G, \k)\otimes_{\k} H^{i-1}(X, \k)
\ar^(.48){\nu^{*} \otimes \id}[r] 
& H^1(X, \k)\otimes_{\k} H^{i-1}(X, \k)
\ar^(.65){\cup_X^i}[r] & H^i(X, \k) 
}.
\end{equation}

\subsection{Homology jump loci and resonance varieties}
\label{subsec:cv rv}

Now assume $\k$ is algebraically closed.
By Lemma \ref{lem:sbars}, we may identify the maximal 
spectrum $\specm(S)$ with $H^1(\bar{G},\k)$, where 
recall $\bar{G}$ is the maximal torsion-free quotient of $G$.

Using this identification, we may view the homology jump loci 
$\VV^i_d(E)$ of the chain complex $E$ from \eqref{eq:e1 page}
as subsets of the $\k$-vector space $H^1(\bar{G},\k)$. 
The next result compares these loci with the 
resonance varieties of $X$, viewed as subsets 
of $H^1(X,\k)=H^1(\pi,\k)$. 

\begin{theorem}
\label{thm:cv res}
Let $\nu\colon \pi\surj G$ be an epimorphism
onto an abelian group, 
and set $\bar{\nu}=\phi\circ \nu \colon \pi\surj \bar{G}$.  
Let $w\in H^1(\bar{G},\k)$.  Then 
\begin{equation}
\label{eq:vr}
w \in \VV^i_d(E) \same \bar{\nu}^* (w) \in \RR^i_d(X,\k).
\end{equation}
\end{theorem}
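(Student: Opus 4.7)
The plan is to show that, after identifying $\specm(S)$ with $H^1(\bar G,\k)$ via Lemma \ref{lem:sbars}, the specialized chain complex $E\otimes_S S/\m_w$ is precisely the $\k$-dual of the Aomoto complex $(A,\delta(a))$ with $A=H^*(X,\k)$ and $a=\bar\nu^*(w)$. The equivalence \eqref{eq:vr} then follows immediately by comparing dimensions of homology.

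First I would unwind what the character $\rho_w\colon S\to \k$ corresponding to $\m_w$ does on $H_1(G,\k)$. By Lemma \ref{lem:sbars}, $\rho_w$ factors as $S\surj \bar S\to \k$, where the second map is the unique $\k$-algebra extension of the linear functional $w\colon H_1(\bar G,\k)\to \k$. Since $H_1(G,\k)$ sits inside $S$ as the degree-one piece $J/J^2$, and the projection $\phi\colon S\to \bar S$ restricts to $\phi_*\colon H_1(G,\k)\to H_1(\bar G,\k)$ in degree one, the character $\rho_w$ restricted to $H_1(G,\k)$ is exactly $w\circ \phi_*$.

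Next I would compute the differential on the reduced complex. Since $E_i=S\otimes_\k H_i(X,\k)$ is free over $S$, we get $E_i\otimes_S S/\m_w\cong H_i(X,\k)$. The $S$-linear differential $d^1_i$ is determined by its values on $\{1\}\otimes H_i(X,\k)$, which by \eqref{eq:d1} lands in $H_1(G,\k)\otimes_\k H_{i-1}(X,\k)$. Writing $\nabla^X_i(h)=\sum f_j\otimes h_j$ with $f_j\in H_1(X,\k)$ and $h_j\in H_{i-1}(X,\k)$, the induced differential on $H_i(X,\k)$ sends $h$ to $\sum \rho_w(\nu_*(f_j))\,h_j=\sum w(\phi_*\nu_*(f_j))\,h_j = \sum a(f_j)\,h_j$, where $a=\bar\nu^*(w)$. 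A direct dualization of \eqref{eq:d1 tr} shows that this is exactly the $\k$-transpose of the Aomoto differential $\delta^{i-1}(a)\colon H^{i-1}(X,\k)\to H^i(X,\k)$, $b\mapsto a\smile b$. Note also that $a^2=\bar\nu^*(w\smile w)=0$ since $w\smile w$ vanishes in $H^2(\bar G,\k)$ (the latter being an exterior algebra on $H^1(\bar G,\k)$, as $\bar G$ is free abelian), so $a$ indeed lies in the domain of definition of $\RR^i_d(X,\k)$.

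Finally I would conclude. Since $X$ is of finite type, each $H_i(X,\k)$ is finite-dimensional, so the chain complex $E\otimes_S S/\m_w$ is the $\k$-dual of $(A,\delta(a))$. Hence $\dim_\k H_i(E\otimes_S S/\m_w)=\dim_\k H^i(A,\delta(a))$, which gives \eqref{eq:vr}. The only real subtlety is the bookkeeping in the previous paragraph: keeping track of the $S$-module structure on $E_{i-1}$ (via the first tensor factor) and correctly identifying $\rho_w|_{H_1(G,\k)}$ with $w\circ \phi_*$, so that the composition $\nu_*\otimes\id$ followed by evaluation turns into precisely the pullback class $a=\bar\nu^*(w)$ acting on $H_1(X,\k)$. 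Once those identifications are made, the duality with the Aomoto complex is transparent from \eqref{eq:d1 tr}.
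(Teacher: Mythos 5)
Your proposal is correct and takes essentially the same approach as the paper: specialize the chain complex $E$ at the maximal ideal $\m_w$ corresponding to $w$, observe via \eqref{eq:d1 tr} that the resulting differential is the $\k$-transpose of the Aomoto differential $\delta(\bar\nu^*(w))$, note that $\bar\nu^*(w)^2=0$ since $w^2=0$ in the exterior algebra $H^*(\bar G,\k)$, and conclude by comparing dimensions of (co)homology. The only difference is cosmetic — you spell out the bookkeeping of $\rho_w|_{H_1(G,\k)}=w\circ\phi_*$ and the dualization of $\nabla^X_i$ in more detail than the paper does.
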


\begin{proof}
By definition, an element $a\in H^1(X,\k)$ 
belongs to  $\RR^i_d(X,\k)$ if and only if 
$a^2=0$ and $\dim_{\k} H^i(H^*(X,\k), \delta(a) )\ge d$, 
where recall $\delta^i(a)\colon H^i(X,\k)\to H^{i+1}(X,\k)$ 
is left-multiplication by $a$.

Now let $w\in H^1(\bar{G},\k)$. 
Clearly, $w^2=0$, and thus $\bar{\nu}^* (w)^2=0$. 
Let $\rho\colon S \to \k$ be the 
$\k$-algebra morphism defined by $\phi^*(w)\in \specm(S)$. 
Denote by $d^1(w)$ the specialization of $d^1$ at $\rho$. 
Using sequence \eqref{eq:d1 tr}, we find that 
\begin{equation}
(d^1_i(w))^{\top}=\delta^{i-1}(\bar{\nu}^* (w)).
\end{equation}

Therefore,
\begin{align*}
w \in \VV^i_d(E) &\same \dim_\k H_i(H_*(X,\k), d^1(w))\ge d
\\
& \same \dim_\k H^i(H^*(X,\k), (d^1(w))^{\top})\ge d 
\\
 & \same \bar{\nu}^* (w) \in \RR^i_d(X,\k),
\end{align*}
and this completes the proof.
\end{proof}

\begin{remark}
\label{rem:vr}
Noteworthy is the situation when $\pi_{\ab}$ is torsion-free 
and $\nu$ is the abelianization map, $\ab\colon \pi\surj \pi_{\ab}$, 
in which case  $H^1(\bar{\pi}_{\ab},\k)= H^1(X,\k)$. 
Let $E$ be the first page of the corresponding spectral 
sequence.  Applying Theorem \ref{thm:cv res}, we may 
then identify  $\VV^i_d(E)$ with $\RR^i_d(X,\k)$. 
\end{remark}

\begin{remark}
\label{rem:next pages}
It would be interesting to find a similar interpretation for the 
homology jumping loci of the other pages in the equivariant 
spectral sequence.  Such an interpretation would likely 
involve the higher-order Massey products in $H^*(X,\k)$.
This is suggested by the case $G=\Z$, when, as explained 
in  \cite[\S1.4]{PS-tams}, our equivariant spectral sequence 
is related to the Farber--Novikov spectral sequence, whose 
differentials are given by certain Massey products, see 
for instance \cite{F}. 
\end{remark}

\section{Jump loci and finiteness properties}
\label{sect:jumps}

In this section, we relate the vanishing of the resonance 
varieties of a space to the finiteness properties of its  
completed Alexander-type invariants.  First, we need 
to recall a well-known fact from commutative algebra.  
For the reader's convenience, we give a sketch of a proof 
(see also \cite[Proposition 9.3]{SYZ}).

\begin{lemma}
\label{lem:findim}
Let $\k$ be an algebraically closed field, and 
let $M$ be a finitely generated module over an affine 
$\k$-algebra $S$.  Then:
\[
 \dim_{\k} M < \infty \same \text{$\supp M$ is finite}.
\]
\end{lemma}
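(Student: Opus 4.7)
The plan is to reduce to the faithful case and then run a standard Artinian argument in each direction. Replace $S$ by $R = S/\ann(M)$, which is again an affine $\k$-algebra over which $M$ is a faithful, finitely generated module. By \eqref{eq:vann}, $\supp_S M = V(\ann M)$ corresponds bijectively to $\specm(R)$, so it suffices to show that $\dim_\k M < \infty$ is equivalent to $\specm(R)$ being finite.

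For the forward implication, if $\dim_\k M < \infty$, then the faithful action of $R$ on $M$ gives an injection $R \hookrightarrow \End_\k(M)$, so $R$ is itself a finite-dimensional $\k$-algebra. A finite-dimensional $\k$-algebra is Artinian, hence has only finitely many maximal ideals, so $\specm(R)$ is finite.

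For the reverse implication, assume $\specm(R) = \{\m_1,\ldots,\m_n\}$ is finite. Here I would invoke the Nullstellensatz in its Jacobson-ring form: an affine $\k$-algebra over an algebraically closed field is a Jacobson ring with $R/\m_i = \k$ for each $i$. Consequently the Jacobson radical of $R$ equals its nilradical, so $\mathfrak{N} := \bigcap_{i=1}^n \m_i$ is a nilpotent ideal (using that $R$ is Noetherian), say $\mathfrak{N}^N = 0$. By the Chinese Remainder Theorem, $R/\mathfrak{N} \cong \prod_{i=1}^n R/\m_i \cong \k^n$, which is finite-dimensional over $\k$. Each quotient $\mathfrak{N}^j/\mathfrak{N}^{j+1}$ is a finitely generated $R/\mathfrak{N}$-module, hence finite-dimensional over $\k$, and the finite filtration $R \supseteq \mathfrak{N} \supseteq \cdots \supseteq \mathfrak{N}^N = 0$ then yields $\dim_\k R < \infty$. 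Since $M$ is finitely generated as an $R$-module, this forces $\dim_\k M < \infty$.

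The only real subtlety is the reverse direction, where one needs both the Nullstellensatz (to get $R/\m_i = \k$ and the Jacobson property) and the Noetherian hypothesis (to get nilpotence of $\mathfrak{N}$); everything else is routine commutative algebra. No extra care is required to match the conclusion with the statement, since $\supp M$ is defined directly as a subset of $\specm(S)$ and is naturally identified with $\specm(R)$ via the quotient map.
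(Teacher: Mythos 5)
Your proof is correct, and it replaces the paper's very terse argument with a fuller, self-contained one. The paper's proof is a chain of three standard equivalences: $\dim_\k M < \infty$ iff $M$ has finite length; finite length of a finitely generated module iff $\dim(S/\ann M) = 0$; and Krull dimension zero iff $\supp M$ finite. You instead pass to the faithful quotient $R = S/\ann M$ and prove directly that $R$ itself is finite-dimensional over $\k$: in the forward direction via the embedding $R \hookrightarrow \End_\k(M)$, and in the reverse direction via the Jacobson property (Nullstellensatz), nilpotence of the nilradical in the Noetherian ring $R$, the Chinese Remainder Theorem, and the resulting finite filtration. This unpacks the paper's appeal to the length--dimension dictionary and makes the dependence on the Nullstellensatz (for $R/\m \cong \k$ and the Jacobson property) and on Noetherianness explicit, which is a nice gain in transparency; the paper's version is shorter precisely because it delegates those facts to standard references. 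One small remark worth being conscious of: the forward direction does not actually need $\k$ algebraically closed, whereas the reverse direction uses it in two places (to get $R/\m_i \cong \k$, and to identify $\supp M$ with $\specm(R)$ via \eqref{eq:vann}); your write-up handles this correctly but doesn't flag the asymmetry.
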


\begin{proof}
The module $M$ is finite-dimensional over $\k$ 
if and only if $M$ has finite length.  This condition 
is equivalent to $M$ having $0$ Krull 
dimension, i.e., $\dim (S/\ann(M))=0$.  In turn, this 
means that the support of $M$ is $0$-dimensional, 
i.e., $\supp M$ is finite.
\end{proof}

Now let $X$ be a connected CW-complex 
with finite $k$-skeleton, for some $k\ge 1$.  
Given an epimorphism $\nu\colon \pi\surj G$ from 
$\pi=\pi_1(X,x_0)$ to a (finitely generated) abelian 
group $G$, let $\phi\colon G\surj \bar{G}$ be the 
projection onto the maximal free-abelian quotient of $G$, 
and set $\bar{\nu}=\phi \circ \nu$.

Let $\k{G}$ be the group-ring of $G$, with coefficients in an 
algebraically closed field  $\k$, let $J$ be its augmentation 
ideal, and let $\widehat{\k{G}}$ be the $J$-adic completion 
of $\k{G}$. As shown in Lemma \ref{lem:sbars}, the associated 
graded ring $S=\gr_J(\k{G})$ is an affine $\k$-algebra, 
whose  maximal spectrum $\specm( S)$ may be identified 
with $H^1(\bar{G},\k)$.

\begin{theorem}
\label{thm:nustar res}
Let  $X^{\nu}\to X$ be the Galois cover associated to an 
epimorphism $\nu\colon \pi\surj G$, and let $\widehat{H_*(X^{\nu},\k)}$ 
be the $J$-adic completion of the $\k{G}$-module 
$H_*(X^{\nu},\k)= H_*(X, (\k{G})_{\nu})$. Then 
\[
\bar\nu^*(H^1(\bar{G},\k))\cap 
\Big( \bigcup_{0\le i\le k} \RR^i_1(X,\k)\Big) =\{0\} 
\implies 
\dim_{\k} \bigoplus_{0\le i\le k} \widehat{H_i(X^{\nu},\k)} < \infty .
\]
\end{theorem}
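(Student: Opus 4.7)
My plan is to feed the resonance hypothesis into the equivariant spectral sequence $E^r\Rightarrow \widehat{H_*(X^\nu,\k)}$ of \cite{PS-tams}, and then extract finiteness of the abutment from finite-dimensionality of the second page, using the jump-loci machinery of Section~\ref{sect:jump}.

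\emph{Reduction to the finite case.} First I would apply Proposition~\ref{prop:skeleton} to replace $X$ by a finite CW-complex $Y$ with a map $f\colon Y\to X$ that identifies $H_1$ over $\Z$, identifies $\RR^i_d(-,\k)$ for $i\le k$, and identifies the twisted homology modules $H_i(-,(\k{G})_{\nu})$ for $i\le k$. Setting $\nu_Y=\nu\circ f_*$, the $J$-adic completions $\widehat{H_i(X^\nu,\k)}$ and $\widehat{H_i(Y^{\nu_Y},\k)}$ will agree as $\widehat{\k{G}}$-modules for $i\le k$, and the resonance hypothesis transports verbatim through part \eqref{f5}. Hence I may assume $X$ is already finite.

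\emph{Vanishing of the jump loci of $E$.} Since $\bar\nu$ is surjective, the induced map $\bar\nu^*\colon H^1(\bar{G},\k)\to H^1(\pi,\k)$ is injective. For each $i\le k$ and every $w\in \VV^i_1(E)$, Theorem~\ref{thm:cv res} places $\bar\nu^*(w)\in \RR^i_1(X,\k)\cap \im(\bar\nu^*)=\{0\}$, forcing $w=0$. Thus $\VV^i_1(E)\subseteq\{0\}$ for $0\le i\le k$. Since $E$ is a chain complex of free, finitely generated $S$-modules, I then invoke Theorem~\ref{thm:vv ww} to obtain $\supp H_i(E)=\WW^i_1(E)\subseteq\{0\}$ for $i\le k$. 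As $S$ is Noetherian, each $H_i(E)$ is a finitely generated $S$-module, so Lemma~\ref{lem:findim} yields $\dim_{\k}H_i(E)<\infty$ for $0\le i\le k$.

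\emph{Passing to the abutment.} The second page of the equivariant spectral sequence satisfies $E^2_i=H_i(E)$, so $E^2$ is finite-dimensional in each total degree $i\le k$. All later pages $E^r$ ($r\ge 2$) are subquotients of $E^2$, and so is $E^\infty$. Convergence of the spectral sequence, as established in \cite{PS-tams}, presents $E^\infty$ in total degree $i$ as the associated graded of a complete, Hausdorff filtration on $\widehat{H_i(X^\nu,\k)}$. The main obstacle, and the only subtle point of the argument, is verifying that this convergence is strong enough for the standard fact ``a complete Hausdorff filtered $\k$-vector space with finite-dimensional associated graded is itself finite-dimensional'' to apply; granting that, I conclude $\dim_{\k}\widehat{H_i(X^\nu,\k)}<\infty$ for $0\le i\le k$, finishing the proof.
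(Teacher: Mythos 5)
Your argument is essentially the paper's proof: reduce to a finite complex via Proposition~\ref{prop:skeleton}, combine Theorem~\ref{thm:cv res} with Theorem~\ref{thm:vv ww} and Lemma~\ref{lem:findim} to show $\bigoplus_{i\le k}E^2_i$ is finite-dimensional, and then pass to the abutment. The ``subtle point'' you flag at the end is exactly what \cite[\S 8]{PS-tams} supplies and what the paper invokes: the spectral sequence filtration on $\widehat{H_i(X^{\nu},\k)}$ is \emph{separated}, and separatedness (plus exhaustiveness) already suffices --- if $\gr$ is finite-dimensional the filtration stabilizes, and separatedness forces it to stabilize at $0$, so the filtered object itself is finite-dimensional. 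Completeness is not actually needed for this last step.
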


\begin{proof}
By Proposition \ref{prop:skeleton}, we may assume $X$ is 
a finite complex.
We use the equivariant spectral sequence $E^{\bullet}(X,\widehat{\k{G}})$  
associated to the Galois cover  $X^{\nu}\to X$ corresponding to 
the epimorphism $\nu\colon \pi_1(X)\surj G$,  with coefficients given by 
the ring morphism $\hat\nu\colon \k{\pi_1(X)}\to \widehat{\k{G}}$. 

The first page of the spectral sequence is $(E^1,d^1)$, 
a chain complex of free, finitely generated $S$-modules, 
with $E^1=S\otimes_{\k} H_*(X,\k)$.  We then have:
\[
\begin{aligned}
  \bar\nu^*(H^1(\bar{G},\k))\cap \Big( \bigcup_{i\le k} \RR^i_1(X,\k)\Big) 
 & = \bar\nu^* \Big( \bigcup_{i\le k} \VV^i_1(E^1) \Big)
    & \text{by Theorem \ref{thm:cv res}} \\
     & = \bar\nu^* \Big(  \bigcup_{i\le k} \WW^i_1(E^1) \Big)
    & \text{by Theorem \ref{thm:vv ww}} \\
     & = \bar\nu^* \Big(  \supp\Big( \bigoplus_{i\le k} H_i(E^1) \Big) \Big)
    &
     \\
        & =  \bar\nu^* \Big( \supp\Big( \bigoplus_{i\le k} E^2_i\Big) \Big).
    &
\end{aligned}
\]

Using our assumption and the injectivity of $\bar\nu^*$, 
we conclude that the support of $\bigoplus_{i\le k} E^2_i$ 
is the set $\{0\}$. Hence, by Lemma \ref{lem:findim}, 
the $\k$-vector space $\bigoplus_{i\le k} E^{2}_i$ is 
finite-dimensional.  It  follows that $\bigoplus_{i\le k} E^{\infty}_i$ 
is also finite-dimensional. 

On the other hand, as shown in \cite[\S{8}]{PS-tams}, the equivariant 
spectral sequence converges to $\widehat{H_*(X^{\nu},\k)}$, 
and the spectral sequence filtration on the limit is separated.  
The desired conclusion readily follows. 
\end{proof}

\begin{corollary}
\label{cor:univ abel}
If $\RR^i_1(X,\k)\subseteq \{0\}$ for $1\le i\le k$, then 
$\dim_{\k} \widehat{H_i(X^{\ab},\k)} < \infty$ for $i\le k$.
\end{corollary}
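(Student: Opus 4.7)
The plan is to obtain Corollary \ref{cor:univ abel} as a direct specialization of Theorem \ref{thm:nustar res} applied to the abelianization epimorphism $\nu = \ab \colon \pi \surj \pi_{\ab}$, whose associated Galois cover is $X^{\ab} \to X$ and whose group $G = \pi_{\ab}$ gives $\bar{G} = \pi_{\ab}/\text{tors}(\pi_{\ab})$.

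I would first verify the hypothesis of the theorem, namely that
\[
\bar\nu^*\bigl(H^1(\bar{G},\k)\bigr)\cap \Big( \bigcup_{0\le i\le k} \RR^i_1(X,\k)\Big) =\{0\}.
\]
For $i=0$, the observation recorded in Section \ref{subsec:resvar} (following \eqref{eq:rida}) gives $\RR^0_1(X,\k) = \{0\}$ automatically, since $H^*(X,\k)$ is connected. Combined with the standing assumption $\RR^i_1(X,\k) \subseteq \{0\}$ for $1 \le i \le k$, the full union $\bigcup_{0\le i \le k} \RR^i_1(X,\k)$ is contained in $\{0\}$. Since $\bar\nu^*(H^1(\bar G,\k))$ is a linear subspace of $H^1(X,\k)$ (and hence contains $0$), the intersection is exactly $\{0\}$, as required.

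With the hypothesis in hand, Theorem \ref{thm:nustar res} yields
\[
\dim_{\k} \bigoplus_{0\le i\le k} \widehat{H_i(X^{\ab},\k)} < \infty,
\]
which immediately implies the stated bound $\dim_{\k} \widehat{H_i(X^{\ab},\k)} < \infty$ for each $0 \le i \le k$.

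There is no real obstacle: the corollary is a clean specialization, and the only bookkeeping is to remember that $\RR^0_1$ contributes $\{0\}$ and hence can be harmlessly folded into the union appearing in Theorem \ref{thm:nustar res}. The injectivity of $\bar\nu^*$ that the theorem relies on is automatic here because the composition $\pi \surj \pi_{\ab} \surj \bar{\pi}_{\ab}$ is surjective onto a free abelian group.
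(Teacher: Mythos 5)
Your proposal is correct and is exactly the specialization the paper intends: apply Theorem \ref{thm:nustar res} to $\nu=\ab\colon \pi\surj\pi_{\ab}$, using $\RR^0_1(X,\k)=\{0\}$ (connectedness) together with the hypothesis to verify that the relevant intersection is precisely $\{0\}$. The paper states Corollary \ref{cor:univ abel} without a separate proof for this very reason, so your write-up simply makes the implicit argument explicit; the only small redundancy is your closing remark about injectivity of $\bar\nu^*$, which holds automatically in the general setting of Theorem \ref{thm:nustar res} as well, since $\bar\nu$ is always an epimorphism onto a free abelian group.
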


In the particular case when $k=1$ and $\k =\C$, this corollary 
was proved in \cite[Theorem C]{DP-ann} by different methods, 
specific to homological degree $1$. It is also shown in \cite{DP-ann} 
that the converse holds in this case, under an additional 
formality assumption.  More precisely, if $X$ is $1$-formal 
in the sense of Sullivan \cite{S}, and the completion of 
$H_1(X^{\ab},\C)$ is finite-dimensional, then 
$\RR^1_1(X,\C)\subseteq \{0\}$. 

In general, though, the converse to  Corollary \ref{cor:univ abel} 
does not hold.   For instance, if $X$ is the Heisenberg 
$3$-dimensional nilmanifold, then the cup-product vanishes 
on $H^1(X,\C)=\C^2$, and so $\RR^1_1(X,\C)= \C^2$. 
On the other hand, $X^{\ab}\simeq S^1$, and thus  
$H_1(X^{\ab},\C)=\C$.

It should also be pointed out that it is really necessary 
to take the completion of the Alexander invariant in 
Corollary \ref{cor:univ abel}, even when $X$ is formal.
For instance, if $X$ is the presentation $2$-complex for 
the group $\pi=\langle a, b \mid a^2 b =ba^2\rangle$ 
from \cite[Example 6.4]{PS-plms}, 
we have that $\RR^1_1(X,\C)= \{0\}$, yet
$\dim_{\C} H_1(X^{\ab},\C) =\infty$.

For more details on the relationship between the 
first resonance variety and the first Alexander 
invariant of a space, we refer to \cite{PS-jtop}.  
Further generalizations and applications will 
be pursued elsewhere.

\begin{ack}
This work was started in May-June 2010, when both authors 
visited the Centro di Ricerca Matematica Ennio De Giorgi in Pisa, 
and was continued in Spring 2011, when the first author visited 
Northeastern University.  The work was completed while the 
authors visited the University of Sydney, in December 2012. 
We are grateful for the support and hospitality provided, 
and we also thank Krzysztof Kurdyka and 
Lauren\c{t}iu P\u{a}unescu for helpful discussions. 
We are also grateful to the referees, for their pertinent 
remarks and suggestions.
\end{ack}

\newcommand{\arxiv}[1]
{\texttt{\href{http://arxiv.org/abs/#1}{arxiv:#1}}}
\newcommand{\arx}[1]
{\texttt{\href{http://arxiv.org/abs/#1}{arXiv:}}
\texttt{\href{http://arxiv.org/abs/#1}{#1}}}
\newcommand{\doi}[1]
{\texttt{\href{http://dx.doi.org/#1}{doi:#1}}}
\renewcommand{\MR}[1]
{\href{http://www.ams.org/mathscinet-getitem?mr=#1}{MR#1}}

\end{document}